\documentclass[11pt]{article}

\usepackage{amsmath,amsthm,amssymb, amsfonts,graphicx,mathrsfs,commath,bm}
\usepackage{dsfont,color,tikz,tkz-graph,enumerate,placeins}
\usepackage{cleveref}
\usetikzlibrary{arrows.meta}
%%%%%
%%%%%
\crefname{section}{§}{§§}
\Crefname{section}{§}{§§}
%%%%%
%%%%%
% theorem environments
\newtheorem{theorem}{Theorem}[section]

\newtheorem{proposition}[theorem]{Proposition}

\newtheorem{assumption}[theorem]{Assumption}
\newtheorem{definition}[theorem]{Definition}

\newtheorem{remark}[theorem]{Remark}
%%%%%

\DeclareMathOperator{\Tr}{Tr}
\makeatletter

\newcommand{\Rmnum}[1]{\expandafter\@slowromancap\romannumeral #1@}

\makeatother

\setlength{\parskip}{5pt plus 1pt minus 1pt}
\setlength{\parindent}{0em}  
\addtolength{\textheight}{1.2in}
\addtolength{\topmargin}{-.6in}
\addtolength{\textwidth}{1.5in}
\addtolength{\oddsidemargin}{-.75in}
\addtolength{\evensidemargin}{-.75in}
%%%%%
%%%%%
%%%%%
%%%%%
%%%%%
\graphicspath{{./Figures/}}

\begin{document}
%%%%%
\title{Mixed mode oscillations and phase locking \\ in coupled FitzHugh-Nagumo model neurons}
%%%%%
\author{Elizabeth N. Davison
\thanks{Department of Mechanical and Aerospace Engineering, Princeton University, Princeton, NJ 08540, USA.}
\and Zahra Aminzare
\thanks{ Program in Applied and Computational Mathematics, Princeton University, Princeton, NJ 08544, USA.}
\and Biswadip Dey$^{*}$
\and Naomi Ehrich Leonard $^{*}$
}
\date{}
\maketitle

%%%%%
%%%%%
%%%%%
%%%%%
\begin{abstract}
We study the dynamics of a low-dimensional system of coupled model neurons as a step towards understanding the vastly complex network of neurons in the brain. We analyze the bifurcation structure of a system of two model neurons with unidirectional coupling as a function of two physiologically relevant parameters: the external current input only to the first neuron and the strength of the coupling from the first to the second neuron. Leveraging a timescale separation, we prove necessary conditions for multiple timescale phenomena observed in the coupled system, including canard solutions and mixed mode oscillations. For a larger network of model neurons, we present a sufficient condition for phase locking when external inputs are heterogeneous. Finally, we generalize our results to directed  trees of model neurons with heterogeneous inputs.
\end{abstract}
%%%%%
%%%%%
{\bf Key words.} Mixed mode oscillations, coupled FitzHugh-Nagumo, geometric singular perturbation, bifurcation 
%%%%%
%%%%%

{\bf AMS subject classifications.} 34A26, 34C15, 34C60, 34D15, 34E17, 37G05, 37G10, 92B20, 92C20 
\maketitle

%%%%%%%%%%%%%%%%%%%%%%%%%%%%%%%%%%%%%%%%%%%%%%%%%%%%%%%%%%%%
%%%%%%%%%%%%%%%%%%%%%%%%%%%%%%%%%%%%%%%%%%%%%%%%%%%%%%%%%%%%
\section{Introduction}
\label{intro}
%%%%%%%%%%%%%%%%%%%%%%%%%%%%%%%%%%%%%%%%%%%%%%%%%%%%%%%%%%%%
%
The study of model neurons has a rich history, dating back to the pioneering work of Hodgkin and Huxley \cite{hodgkin1952quantitative} on the action potential in the squid giant axon. A two-dimensional model that captures salient qualities of the four-dimensional Hodgkin-Huxley model was developed independently by FitzHugh \cite{fitzhugh_mathematical_1955,fitzhugh1961impulses} and by Nagumo\cite{nagumo1962active}. In this model, commonly known as the FitzHugh-Nagumo (FN) model, one variable represents the membrane potential and the other represents a gating variable. A constant external input to the FN model neuron can produce quiescent behavior (a low-voltage stable equilibrium point), firing (a stable limit cycle), or saturated behavior (a high-voltage stable equilibrium point). The FN model neuron captures realistic neuronal behavior such as spike accommodation, bistability, and excitability \cite{izhikevich2007dynamical}. 

A system of two coupled neurons can represent a larger network of neurons that cluster into two groups in which neurons within each group synchronize but neurons in different groups do not.
 A cluster synchronized network can be reduced to a \emph{quotient network} \cite{russo_global_2010, schaub2016graph} by leveraging balanced conditions on coupling and graph structure \cite{sorrentino2016complete}, as well as  
bounds on coupling strength 
\cite{davison_sync_2016, aminzare2018cluster}.
A system of two FN model neurons with gap junction diffusive coupling (two-FN system) has been studied numerically and analytically in the symmetric case\cite{hoff_numerical_2014,campbell2001multistability}, where both neurons receive the same external input and are coupled bidirectionally (undirected coupling). Gap junction diffusive coupling is modeled as a difference between the membrane potentials of the two model neurons multiplied by a parameter that represents the coupling strength. The two-FN system has also been studied numerically\cite{hoff_numerical_2014} in a context where the intrinsic properties of both models are the same but the neurons are coupled unidirectionally (directed coupling). Here, we add to the existing literature by analytically describing the bifurcation structure of the directed two-FN system in terms of two parameters, the external input to the first model neuron and the unidirectional coupling strength from the first model neuron to the second.

The FN model neuron is a classic example of a fast-slow system, and the coupled pair of FN model neurons exhibits rich dynamics characterized by the timescale separation. Under certain conditions on external input and coupling strength, the system exhibits \emph{canard solutions}, which are solutions that pass from a stable to an unstable manifold in the slow system and stay near the unstable manifold for a long time relative to the slow system timescale \cite{benoit1990canards,szmolyan2001canards, kuehn2015multiple}. Canard solutions result from the presence of folded singularities, particularly folded nodes, which give rise to robust families of canard solutions \cite{szmolyan2001canards,wechselberger_existence_2005}. Canard solutions can lead to mixed mode oscillations (MMOs), which are periodic oscillations that alternate between canard-driven oscillations and a relaxation oscillation \cite{milik1998geometry}. The existence of canards and MMOs has been described for systems in four dimensions 
\cite{benoit1983systemes,szmolyan2001canards}, systems with two slow variables and two fast variables \cite{tchizawa_two_2013}, and generalized systems in arbitrary finite dimensions \cite{wechselberger2012propos}. The folded saddle node of type I (FSN I) and folded saddle node of type II (FSN II) have been identified as mechanisms for the onset of MMOs in fast-slow systems \cite{milik1998geometry, desroches_geometry_2008, krupa2010local, Curtu_Rubin_Canard-MMO, desroches2012mixed}. We leverage these results to determine the parameter regimes where canards and MMOs may be present in the directed two-FN system, which has two slow variables and two fast variables.	
	
Canard-induced MMOs have been studied analytically in numerous systems including chemical reactions \cite{petrov1992mixed,moehlis2002canards}, the Hodgkin-Huxley neuronal model \cite{rubin2007giant}, cortical grid cells \cite{rotstein2008canard}, and a self-coupled FN model neuron \cite{desrochesmixedmode2008}. In a two-FN system, the onset of firing, as coupling strength is increased, can be characterized by the appearance of canard solutions, and by MMOs, as the coupling is increased further. The existence of canard solutions in a two-FN system was first proven using nonstandard analysis in the case of model neurons with identical parameters \cite{tchizawa2002winding}. Necessary conditions were found in terms of a model parameter that controls the slope of the linear nullcline of the system. Conditions for different stability types of folded singularities were found in terms of the same model parameter in a slightly modified, but still symmetric, model \cite{ginoux2015canards}. Here, we fix the corresponding parameter within the range where canard solutions may be present and find conditions for existence of canard-induced MMOs in terms of two parameters that break symmetry: external input and coupling strength.

A condition for the onset of MMOs in a two-FN system was shown as an application of a method developed to study MMOs in systems with two fast variables and two slow variables \cite{ krupa2014weakly}. There are no symmetry requirements and the main result is a necessary condition for MMO onset in terms of a parameter corresponding to the input to one of the neurons. In the spirit of this work, we prove explicit necessary conditions for existence of canard solutions and MMOs in the directed two-FN system in terms of both the external input and the coupling strength. First, we take the singular limit of the system, and obtain necessary conditions on the bifurcation parameters for existence of transcritical bifurcations. The transcritical bifurcations in the singularly perturbed system delineate regions in parameter space where MMOs exist in the original system.

We show, further, that the original system admits Hopf bifurcations within a distance of order $\epsilon$ around the point in the parameter space where the singularly perturbed system admits transcritical bifurcations. 
This we use to derive novel bounds for phase-locking in representative networks of model neurons.
Phase locking is a generalization of synchronization where the phases of oscillating models remain separated by a constant offset, while amplitudes and waveforms may vary \cite{rosenblum1996phase}. A common phenomenon in nature, phase locking has been studied in cardiac rhythms \cite{van1928lxxii, glass1982fine,guevara1981phase}, in the firing patterns of squid axons \cite{matsumoto1987chaos}, in two coupled phase oscillators \cite{kopell2002mechanisms}, in local field potential measurements of neurons in the human brain \cite{rutishauser2010human, wang2010neurophysiological}, and in the brain as a mechanism for coordination between groups of neurons \cite{varela2001brainweb}.

Finally, we consider the more general problem of $n$ FN model neurons linked by unidirectional gap junction diffusive coupling in a directed tree, with heterogeneous coupling strengths and heterogeneous external inputs. As in the directed two-FN system, this can represent a class of large networks that contain cluster synchronized groups of model neurons and satisfy conditions on graph structure \cite{sorrentino2016complete} and connectivity\cite{davison_sync_2016, aminzare2018cluster} so they can be reduced to a quotient network \cite{russo_global_2010, schaub2016graph}.  An analogous problem with homogeneous coupling strength has been analyzed in detail in the strong coupling limit where the dynamics are reduced using singular perturbation theory\cite{medvedev2001synchronization}.   Here, we leverage an analysis of the singular perturbation of the directed two-FN system to provide necessary conditions for the existence of MMOs and sufficient conditions for phase locking in the original $n$-FN system.

Our contributions towards understanding the dynamics of networked nonlinear model neurons are as follows. First, we explain how the bifurcation structure of the directed two-FN system relates to the bifurcation structure of the reduced, singularly perturbed system that is used to study canard solutions. This is critical because the reduced system can be used to explain features of the original system and the original system can be used to understand the reduced system. Second, we provide necessary conditions for canards and MMOs in the directed two-FN system in terms of two model parameters; this is an extension of the conditions found in terms of one parameter in the literature. Third, we provide a sufficient condition for phase locking given heterogeneous external inputs in the directed two-FN system. We generalize these conditions to directed trees of FN model neurons.

The paper is organized as follows. In Section \ref{oneFN}, we review the standard analysis of a single FN model neuron
 and give a biophysical rationale for bounds on model parameters used throughout the paper. In Section \ref{twoFN}, we define the directed two-FN system and find conditions for Hopf bifurcations. In Section \ref{canardsetc}, we compute the singular perturbation of the directed two-FN system. We prove necessary conditions for the transcritical bifurcations in the singularly perturbed system and canards and MMOs in the original system in Section \ref{mainres}. In Section \ref{chainFN}, we generalize the results to directed trees of FN model neurons. We provide a numerical example to illustrate our results.
%%%%%%%%%%%%%%%%%%%%%%%%%%%%%%%%%%%%%%%%%%%%%%%%%%%%%%%%%%%%
%
%
%
%
%
%%%%%%%%%%%%%%%%%%%%%%%%%%%%%%%%%%%%%%%%%%%%%%%%%%%%%%%%%%%%
%%%%%%%%%%%%%%%%%%%%%%%%%%%%%%%%%%%%%%%%%%%%%%%%%%%%%%%%%%%%
\section{Single FitzHugh-Nagumo model neuron}
\label{oneFN}
%%%%%%%%%%%%%%%%%%%%%%%%%%%%%%%%%%%%%%%%%%%%%%%%%%%%%%%%%%%%
%
The FN model is a two-dimensional reduction of the four-dimensional Hodgkin-Huxley (HH) model for the neuronal action potential. By letting $y$ and $z$ represent the membrane potential and a slow gating variable, respectively, its dynamics are given by
\begin{equation}
\begin{aligned}
\dfrac{dy}{dt} &= \psi(y) -z + I,
\\
\dfrac{dz}{dt} &= \epsilon (y-b z),
\end{aligned}
\label{FN1}
\end{equation}
where $\psi(y)$ is a cubic polynomial. For our purposes, we use $\psi(y) = y-\frac{y^3}{3}-a$. In this model, $I$ corresponds to an external input, $0<\epsilon\ll1$ is a positive timescale separation constant, and $a$ and $b$ are positive constants. The FN model is far simpler to analyze than the full HH model due to the lower dimension.

Despite the lower dimension, the FN model captures a range of physiologically meaningful regimes and behaviors \cite{izhikevich2007dynamical, rocsoreanu2012fitzhugh}. The bifurcation structure of a single FN model is complex, with a Hopf bifurcation and \emph{canard explosion}, or transition from small, Hopf-induced limit cycles to large relaxation oscillations\cite{guckenheimer2013nonlinear}, marking both the transition from quiescent to firing and the transition from firing to saturated. As $I$ is increased, the single stable equilibrium point transitions to relaxation oscillations, or stable limit cycles. As $I$ is further increased, the large stable limit cycles persist and small unstable limit cycles appear. The unstable limit cycles disappear in a subcritical Hopf bifurcation at $I_0$ and the equilibrium point becomes unstable. Figure \ref{FN_bifurcation_diagram} depicts the bifurcation diagram of the FN model when $I$ is varied. 
\begin{figure}[h]
	\centering
	\includegraphics[width =0.5\textwidth]{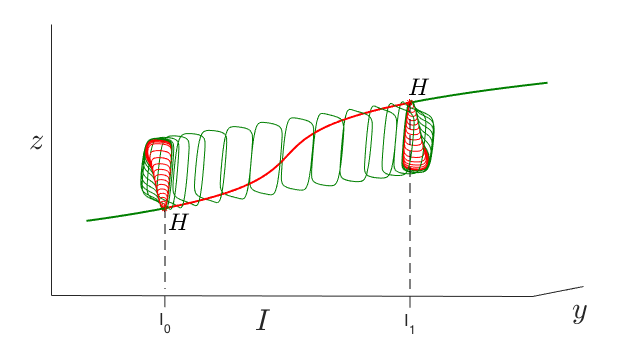}
	\caption{ Bifurcation diagram for a single FN model drawn with a numerical continuation software tool \cite{dhooge2003matcont} for $a = 0.875$, $b =0.8$, and $\epsilon= 0.08$. Green corresponds to stable equilibrium points or limit cycles and red corresponds to unstable equilibrium points or limit cycles.
    When $I$ is small, $I<I_0$, the FN model is in the quiescent regime. For $I<I_0$ where $I$ is sufficiently close to $I_0$, the system concurrently exhibits a stable fixed point, small unstable oscillations, and large stable relaxation oscillations. The FN model is in the firing regime when $I_0<I<I_1$. For $I>I_1$ where $I$ is sufficiently close to $I_1$, the system concurrently exhibits a stable fixed point, small unstable oscillations, and large stable relaxation oscillations.  Finally, for $I>I_1$, the FN model is in the saturated regime.}
	\label{FN_bifurcation_diagram}
\end{figure}

The FN model is a suitable choice for network analysis because it is both dynamically rich and analytically tractable.  A necessary condition\cite{Rinzel1981} for the FN model to exhibit distinct quiescent, firing, and saturated regimes is the existence of a unique equilibrium point for all values of the bifurcation parameter $I$. In this paper we assume the following:
\begin{assumption}
Parameters $a$, $b$, and $\epsilon$ are such that the FN model (\ref{FN1}) has a unique equilibrium point for all values of $I\geq 0$. This results in conditions $0<a<1$ and $0<b<1$. \label{assumption}
\end{assumption}
The condition on $a$ corresponds to a simple voltage offset requirement and the condition on $b$ corresponds to a requirement that the slope of the linear nullcline of (\ref{FN1}) must be greater than that of the cubic nullcline of (\ref{FN1}).

The following proposition from Ref.~\cite{guckenheimer2013nonlinear}  
describes the stability of the unique equilibrium point of (\ref{FN1}) given Assumption \ref{assumption} and conditions on $I$ for Hopf bifurcations.

\begin{proposition} (Ref.~\cite{guckenheimer2013nonlinear})
Let Assumption \ref{assumption} hold. Then, there exists $I_0<I_1$ such that the equilibrium point is stable for $I<I_0$ and, as $I$ increases, it will undergo a transition to an unstable equilibrium point through a Hopf bifurcation at $I_0$. As $I$ is increased further it will undergo a transition from unstable to stable through a second Hopf bifurcation at $I_1$. 
\end{proposition}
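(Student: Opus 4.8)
The plan is to work directly with the unique equilibrium guaranteed by Assumption~\ref{assumption} and with the Jacobian of \eqref{FN1}. Setting $dz/dt=0$ gives $z=y/b$, and substituting into $dy/dt=0$ reduces the equilibrium condition to the scalar equation $-\tfrac13 y^3+\big(1-\tfrac1b\big)y+(I-a)=0$. Since $0<b<1$, the left-hand side has derivative $-y^2+\big(1-\tfrac1b\big)<0$ for every $y$, so it is strictly decreasing; this both re-derives the uniqueness asserted in Assumption~\ref{assumption} and, via the implicit function theorem, shows that the equilibrium value $y^{*}(I)$ is a smooth function of $I$ with $dy^{*}/dI=\big((y^{*})^2+\tfrac1b-1\big)^{-1}>0$. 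Hence $y^{*}(I)$ is strictly increasing in $I$.

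Next I would linearize. The Jacobian at the equilibrium is
\begin{equation*}
J(I)=\begin{pmatrix} 1-(y^{*})^2 & -1 \\ \epsilon & -\epsilon b \end{pmatrix},
\qquad
\Tr J(I)=1-(y^{*}(I))^2-\epsilon b,
\qquad
\det J(I)=\epsilon\big(1-b+b\,(y^{*}(I))^2\big).
\end{equation*}
Because $0<b<1$ we have $\det J(I)>0$ for all $I$, so the equilibrium is never a saddle and its linear stability is governed entirely by the sign of $\Tr J(I)$: stable when $\Tr J(I)<0$, unstable when $\Tr J(I)>0$. Since $0<\epsilon\ll 1$ and $0<b<1$ give $1-\epsilon b>0$, the instability condition $\Tr J(I)>0$ is equivalent to $|y^{*}(I)|<\sqrt{1-\epsilon b}$.

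I would then read off the global picture from monotonicity. As $y^{*}$ is continuous and strictly increasing, the set $\{I:\ |y^{*}(I)|<\sqrt{1-\epsilon b}\}$ is an open interval $(I_0,I_1)$, with $I_0,I_1$ defined by $y^{*}(I_0)=-\sqrt{1-\epsilon b}$ and $y^{*}(I_1)=+\sqrt{1-\epsilon b}$; on this interval the equilibrium is unstable, and it is stable for $I<I_0$ and for $I>I_1$. To promote $I_0$ and $I_1$ to genuine Hopf bifurcation points I would verify the hypotheses of the Hopf bifurcation theorem: at $I\in\{I_0,I_1\}$ one has $\Tr J=0$ and $\det J>0$, so the eigenvalues are the purely imaginary pair $\pm i\sqrt{\det J(I)}$; and the transversality condition holds because $\tfrac{d}{dI}\operatorname{Re}\lambda(I)=\tfrac12\tfrac{d}{dI}\Tr J(I)=-\,y^{*}(I)\,\tfrac{dy^{*}}{dI}$, which is nonzero at each of $I_0,I_1$ (there $y^{*}\neq 0$ and $dy^{*}/dI>0$) and is positive at $I_0$ but negative at $I_1$ — i.e.\ the eigenvalues cross the imaginary axis outward at $I_0$ (loss of stability) and inward at $I_1$ (recovery of stability), exactly as the statement claims.

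Essentially everything above is a computation, so there is no deep obstacle; the one point requiring care is locating the instability interval relative to the admissible range of $I$ — specifically, ensuring $I_0\ge 0$ (equivalently, that the model is quiescent at $I=0$, i.e.\ $y^{*}(0)\le-\sqrt{1-\epsilon b}$), which is where the biophysically motivated ordering of $a,b,\epsilon$ behind Assumption~\ref{assumption} enters. (Determining whether each of the two Hopf bifurcations is sub- or supercritical would require a first Lyapunov coefficient computation, but the proposition asserts only their existence, so that is not needed here.)
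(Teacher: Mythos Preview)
Your proposal is correct and follows the standard linearization argument. The paper does not actually supply its own proof of this proposition --- it is quoted as a known result from Ref.~\cite{guckenheimer2013nonlinear} --- but your computation is precisely the one the paper carries out later for the coupled system in Proposition~\ref{original_eq}: compute the Jacobian, observe that $\det J>0$ under Assumption~\ref{assumption} so stability is controlled by the trace $1-(y^*)^2-b\epsilon$, and use monotonicity of $y^*(I)$ to locate the two crossings. Your transversality check is a small addition beyond what the paper makes explicit, and your caveat about $I_0\ge 0$ is not required by the proposition as stated (it only asserts existence of $I_0<I_1$), so you may drop it.
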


We review the approach to analyzing stability of the limit cycles arising from the Hopf bifurcations following  the methods of Chapter 3 of Ref.~\cite{guckenheimer2013nonlinear} and the application to the FN model in Ref.~\cite{holmes_notes}.  We  generalize the approach to networks of FN model neurons in later sections.

The dynamics at a Hopf bifurcation at the origin of a two-dimensional system can be written as
\begin{align}
\left(\begin{array}{c} \dfrac{dx_1}{dt}\\ \dfrac{dx_2}{dt}\end{array}\right) = \left(\begin{array}{cc} 0 & -\omega\\ \omega & 0 \end{array}\right)\left(\begin{array}{c} x_1\\ x_2\end{array}\right)+\left(\begin{array}{c} F(x_1, x_2)\\ G(x_1,x_2) \end{array}\right),\label{form_Hopf}
\end{align}
such that $F$ and $G$ satisfy $F(0,0) = G(0,0) = 0$ and $D_\mathbf{x} F(0,0) = D_\mathbf{x}G(0,0)=\mathbf{0}$ where $\mathbf{x} = [x_1, \; x_2]$.

\begin{definition}
[Cubic coefficient \cite{hassard1978bifurcation, guckenheimer2013nonlinear}] Consider the system $\dot{\mathbf{x}} = \mathbf{f}(\mathbf{x},\gamma)$.
The coefficient of the cubic term of the Taylor expansion of $\mathbf{f}$, (\ref{form_Hopf}) is expressed as
\begin{align}
	\alpha &= \frac{1}{16}(F_{x_1 x_1 x_1}+F_{x_1 x_2 x_2}+G_{x_1 x_1 x_2}+G_{x_2 x_2 x_2})\biggr\rvert_{(0,0)}\notag\\
	&+\frac{1}{16\omega}(F_{x_1 x_2}(F_{x_1 x_1}+F_{x_2 x_2})
    -G_{x_1 x_2}(G_{x_1 x_1}+G_{x_2 x_2})\notag\\
    &-F_{x_1 x_1}G_{x_1 x_1}+F_{x_2 x_2}G_{x_2 x_2})\biggr\rvert_{(0,0)},
\label{CCdefn}
\end{align} 
where $F_{x_1 x_2}$ denotes $\dfrac{\partial^2 F}{\partial x_1 \partial x_2}$, and so on. \label{cubiccoeff}
\end{definition}
\begin{proposition}[Theorem 3.4.2 (modified) \cite{guckenheimer2013nonlinear}]
The system $\dot{\mathbf{x}} = \mathbf{f}(\mathbf{x},\gamma)$, admits a Hopf bifurcation for the parameter value $\delta_0$ at an equilibrium point $\mathbf{p}$ if
\begin{enumerate}
\setlength{\itemsep}{0pt}
\setlength{\parskip}{0pt}
\item $D_\mathbf{x} \mathbf{f}(\mathbf{p},\delta_0)$ has a pair of pure imaginary eigenvalues and no other eigenvalues with zero real parts.
\item $ \dfrac{\partial}{ \partial {\delta}}\Re(\lambda(\delta))\biggr \rvert_{\delta = \delta_0} \neq 0$, where $\Re(\lambda)$ denotes the real part of the eigenvalue $\lambda$.
\item The cubic coefficient of the Taylor expansion of $\mathbf{f}$, denoted by $\alpha$ and defined in Definition \ref{cubiccoeff}, is nonzero.
\end{enumerate}
Furthermore, if $\alpha<0$, the Hopf bifurcation is supercritical, while, if $\alpha>0$, the Hopf bifurcation is subcritical.\label{Hopfcond}
\end{proposition}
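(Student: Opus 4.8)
The plan is to reduce the statement to the classical Poincaré--Andronov--Hopf normal-form computation, so that conditions 1--3 become precisely the nondegeneracy hypotheses needed there. First I would translate the equilibrium to the origin: since the eigenvalues of $D_{\mathbf x}\mathbf f(\mathbf p,\delta_0)$ are $\pm i\omega$ with $\omega\neq 0$, this matrix is invertible, so the implicit function theorem yields a smooth branch $\delta\mapsto\mathbf p(\delta)$ of equilibria with $\mathbf p(\delta_0)=\mathbf p$, and the substitution $\mathbf x\mapsto\mathbf x-\mathbf p(\delta)$ fixes the equilibrium at the origin for all nearby $\delta$. Because the purely imaginary eigenvalues at $\delta_0$ are simple (condition 1), they persist as a smooth complex-conjugate pair $\mu(\delta)\pm i\omega(\delta)$ with $\mu(\delta_0)=0$, $\omega(\delta_0)=\omega$, and a smooth $\delta$-dependent linear change of coordinates brings the linear part into the real canonical form of (\ref{form_Hopf}) with the $0$ on the diagonal replaced by $\mu(\delta)$ and $\omega$ replaced by $\omega(\delta)$; the higher-order remainder is then exactly the $(F,G)$ appearing there.

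Next I would carry out the normal-form reduction to third order. Passing to the complex coordinate $z=x_1+ix_2$ and applying successive near-identity polynomial changes of variable to remove all nonresonant quadratic and cubic monomials, the system becomes $\dot z=(\mu(\delta)+i\omega(\delta))z+c_1(\delta)\,z\,|z|^2+O(|z|^5)$. In polar coordinates $z=re^{i\theta}$ this reads $\dot r=\mu(\delta)r+(\Re c_1(\delta))\,r^3+O(r^5)$ and $\dot\theta=\omega(\delta)+O(r^2)$. The essential bookkeeping step is to track how the second-order partials of $F$ and $G$ feed, through the quadratic part of the normalizing transformation, into the resonant cubic coefficient, thereby verifying that $\Re c_1(\delta_0)$ equals the explicit Cartesian combination $\alpha$ of Definition~\ref{cubiccoeff}; this is where the $\tfrac{1}{16}$ and $\tfrac{1}{16\omega}$ prefactors and the particular pattern of mixed partials originate. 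This algebraic identification is the most laborious part of the argument, but it is a fixed finite computation independent of $\mathbf f$.

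Finally I would analyze the truncated amplitude equation $\dot r=\mu r+\alpha r^3$. Its nontrivial zeros satisfy $r_*^2=-\mu/\alpha$, which is positive on exactly one side of $\delta_0$; condition 2, i.e. $\mu'(\delta_0)\neq 0$, makes $\mu$ cross zero transversally, so $r_*\sim\sqrt{|\mu|}$ parametrizes a one-parameter family of limit cycles collapsing onto $\mathbf p$ as $\delta\to\delta_0$, and condition 3, $\alpha\neq 0$, ensures the branch is nondegenerate. To promote this to a genuine periodic orbit of the full system I would rescale $r=\sqrt{|\mu|}\,\rho$ together with time, so the $O(r^5)$ tail becomes an $O(|\mu|)$ perturbation, and then apply the implicit function theorem (equivalently, a Lyapunov--Schmidt reduction of the Poincaré return map) to continue the invariant circle of the truncated system to a nearby closed invariant curve. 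Stability and criticality then follow from linearizing the amplitude equation at $r_*$, namely $\partial_r(\mu r+\alpha r^3)\big|_{r_*}=\mu+3\alpha r_*^2=-2\mu$: when $\alpha<0$ the cycle exists for $\mu>0$ (on the side where the origin has become unstable) and is attracting, hence supercritical, whereas for $\alpha>0$ it exists for $\mu<0$ and is repelling, hence subcritical. The main obstacle, besides the normal-form bookkeeping, is making the passage from the truncated to the full system uniform as $\delta\to\delta_0$; this is exactly what the blow-up rescaling above accomplishes, and it is the content of Theorem~3.4.2 of Ref.~\cite{guckenheimer2013nonlinear} that we invoke here.
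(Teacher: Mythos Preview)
Your sketch is a faithful outline of the standard Poincar\'e--Andronov--Hopf argument and contains no mathematical gaps that I can see. However, you should be aware that the paper does not prove this proposition at all: it is quoted directly as Theorem~3.4.2 of Guckenheimer--Holmes and used as a black box, so there is no ``paper's own proof'' to compare against. What you have written is essentially the proof one finds in that reference, so in that sense your approach and the cited source agree; but for the purposes of this paper the statement functions as an imported result, and reproducing the full normal-form derivation here would be out of scope.
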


The cubic coefficient is also called the first Lyapunov coefficient. It can be used to determine the location of a solution that separates small unstable limit cycles from larger stable limit cycles and so delineates the canard explosion \cite{kuehn2015multiple}. The value of the bifurcation parameter $I$ at the canard explosion
 is close to the value of $I$ at the Hopf bifurcation, 
 as illustrated in Figure \ref{FN_bifurcation_diagram}. 

For the FN model, the cubic coefficient is given by
\begin{align*}
\alpha &=\dfrac{1}{8}\left(\dfrac{2b-b^2 \epsilon-1}{1-b^2\epsilon}\right).
\end{align*}
In this paper we choose parameters that ensure Assumption~\ref{assumption} holds and the bifurcations are subcritical Hopf ($\alpha>0$); these yield biologically realistic dynamics \cite{baer1986singular,rinzel1983hopf}. We fix $a=0.875$, $b=0.8$, and $\epsilon = 0.08$, and we consider $I\ge 0$ as a bifurcation parameter.
%%%%%%%%%%%%%%%%%%%%%%%%%%%%%%%%%%%%%%%%%%%%%%%%%%%%%%%%%%%%
%
%
%

%
%
%%%%%%%%%%%%%%%%%%%%%%%%%%%%%%%%%%%%%%%%%%%%%%%%%%%%%%%%%%%%
%%%%%%%%%%%%%%%%%%%%%%%%%%%%%%%%%%%%%%%%%%%%%%%%%%%%%%%%%%%%
\section{Directed two-FN model neuron system}
\label{twoFN}
%%%%%%%%%%%%%%%%%%%%%%%%%%%%%%%%%%%%%%%%%%%%%%%%%%%%%%%%%%%%

The directed two-FN model neuron system is shown in Figure \ref{2FNfig}. The first model neuron is denoted $A$ and it receives external input $I$. The second model neuron is denoted $B$ and it receives no external input.  The coupling is unidirectional from $A$ to $B$, with coupling strength $\gamma$. $A$ and $B$ have the same intrinsic dynamics, i.e., the same values of $a$, $b$, and $\epsilon$ as defined above. We let $I$ and $\gamma$ be bifurcation parameters. The directed two-FN system corresponds to the reduced cluster synchronized setting where there are many more model neurons in one cluster (the first) than the other (the second).
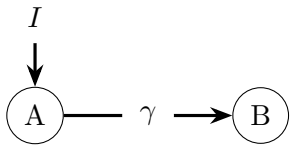
\begin{figure}[b!]
\begin{center}
	\begin{tikzpicture}
	\begin{scope}[>={Stealth[black]},
	every node/.style={fill=white,circle},
	every edge/.style={draw=black,very thick}]
	\node[shape=circle,draw=black] (A) at (0,0) {A};
	\node[shape=circle,draw=black] (B) at (3,0) {B};
	\path [->] (A) edge node {$\gamma$} (B);
	\path [->] (0,1.5) edge node [above]{$I$} (A) ;
	\end{scope}
	\end{tikzpicture}
\end{center}
\caption{A directed network of two FN model neurons, $A$ and $B$.  $A$ receives an external input $I$ and there is a unidirectional coupling from $A$ to $B$ with strength $\gamma$.}\label{2FNfig}
\end{figure}

The equations for the directed two-FN system are
\begin{subequations}\label{FN2}
\begin{align}
	\dfrac{d{y}_A}{dt} &= \zeta_A(y_{A},z_{A}, y_{B}, z_{B})= y_A-\frac{y_A^3}{3}-a -z_A+I,\label{FN2a}\\
	\dfrac{d{z}_A}{dt} &= \epsilon\xi_A(y_{A},z_{A}, y_{B}, z_{B})= \epsilon (y_A-b z_A),\label{FN2b}\\
	\dfrac{d{y}_B}{dt} &=\zeta_B (y_{A},z_{A}, y_{B}, z_{B})= y_B-\frac{y_B^3}{3} -a -z_B+\gamma (y_A-y_B),\label{FN2c}\\
	\dfrac{d{z}_B}{dt} &= \epsilon\xi_B(y_{A},z_{A}, y_{B}, z_{B})= \epsilon (y_B-b z_B). \label{FN2d}
\end{align}
\end{subequations}
Here, $y_A$ ($y_B$) is the membrane potential of $A$ ($B$) and $z_A$ ($z_B$) represents a slow gating variable in $A$ ($B$).

The bifurcation structure of directed and undirected two-FN systems have been studied extensively from a numerical perspective \cite{hoff_numerical_2014}. The bifurcation structure of the undirected system has been studied through analytical methods that leverage symmetry-based arguments or assume symmetric or near-symmetric FN models \cite{campbell2001multistability, tchizawa2002winding, ginoux2015canards}. In contrast we examine the system with asymmetry in both the external input and the coupling.

We begin by classifying the behavior of the two model neurons $A$ and $B$ in the $I$-$\gamma$ parameter space, as shown in Figure \ref{Igamma_bifn}. Regions (1)-(7) are described as follows:
\begin{enumerate}[(i)]
\setlength{\itemsep}{0pt}
\setlength{\parskip}{0pt}
\item For $I$ less than the value where $A$ undergoes the lower Hopf bifurcation, $I<I_{0A}$, both $A$ and $B$ will be quiescent at a stable equilibrium point. This corresponds to region (1) of Figure \ref{Igamma_bifn}. See Section \ref{quiescence} for details.
\item For $I$ greater than the value where $A$ undergoes the upper Hopf bifurcation,  $I>I_{1A}$, $A$ becomes saturated and, as $\gamma$ varies, there are three distinct behaviors  for $B$. See Section \ref{abovebd} for  details. For $I>I_{1A}$, there exist $I_{0B}(I)$ and $I_{1B}(I)$ such that
\begin{enumerate}
\setlength{\itemsep}{0pt}
\setlength{\parskip}{0pt}
\item For $\gamma<I_{0B}(I)$, $B$ is quiescent. This corresponds to region (2) in Figure \ref{Igamma_bifn}.
\item For $I_{0B}(I)<\gamma<I_{1B}(I)$, $B$ is firing. This corresponds to region (3) in Figure \ref{Igamma_bifn}.
\item For $\gamma>I_{1B}(I)$, $B$ is saturated. This corresponds to region (4) in Figure \ref{Igamma_bifn}.
\end{enumerate}
\item For $I_{0A}<I<I_{1A}$, $A$ is firing and there are three distinct behaviors for $B$:
\begin{enumerate}
\setlength{\itemsep}{0pt}
\setlength{\parskip}{0pt}
\item When $\gamma>1-b\epsilon$, $B$ is phase locked with  $A$. This corresponds to region (5) of Figure \ref{Igamma_bifn}. See Section \ref{highg} for details.
\item When $\gamma<1-b\epsilon$ and $I$ is below a curve, denoted by $I_*(\gamma)$, MMOs or small canard oscillations may be present. This corresponds to region (6) of Figure \ref{Igamma_bifn}. See Section \ref{canardsetc} for the derivation and Section \ref{MMObegin} for details.
\item When $\gamma<1-b\epsilon$ and $I$ is above $I_*(\gamma)$, $B$ is firing. This corresponds to region (7) of Figure \ref{Igamma_bifn}. See Section \ref{canardsetc} for the derivation and Section \ref{MMObegin} for details.
\end{enumerate}
\end{enumerate}
\begin{figure}[t!]
	\centering
	\includegraphics[width = 0.5\textwidth]{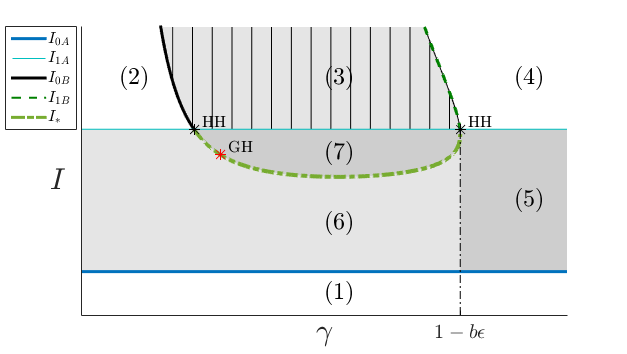}
	\caption{Regions of behavior of the directed two-FN system (\ref{FN2}) in the $I$-$\gamma$ parameter space. Boundaries between regions are identified in the key. In regions (3), (5), (6), and (7), shaded gray, there is a stable limit cycle such that either $A$ or $B$ is firing. In region (3), with cross hatching, only $B$ is firing. In regions (5) and (7), in darker gray, there is phase locking. In region (6), in light gray, $A$ is firing and $B$ may exhibit canard solutions. All boundaries are computed analytically. HH denotes a Hopf-Hopf bifurcation and GH denotes a generalized Hopf bifurcation.}
	\label{Igamma_bifn}
\end{figure}

In Section~\ref{canardsetc} we desingularize the system (\ref{FN2}) by performing a singular perturbation and changing timescales. We analyze the resulting second-order dynamics and draw conclusions about canards and MMOs for the original dynamics (\ref{FN2}) in Section \ref{mainres}.

\section{Fast-slow phenomena in the directed two-FN system}
\label{canardsetc}
In this section, we assume $A$ is firing and study the onset of firing in $B$ as $\gamma$ increases. We begin by providing definitions of canards and MMOs, which are observed numerically at the transition from quiescent to firing in $B$. For a general fast-slow system expressed as
\begin{equation}
\begin{aligned}
\dfrac{d\mathbf{y}}{dt} 
&= \bm{f}(\mathbf{y},\mathbf{z}),
\\
\dfrac{d\mathbf{z}}{dt} 
&= \epsilon \bm{g}(\mathbf{y},\mathbf{z}),
\end{aligned}
\label{fssys}
\end{equation}
$\mathbf{y}\in \mathds{R}^m$ are fast variables, $\mathbf{z}\in \mathds{R}^n$ are slow variables, and $0<\epsilon \ll 1$ is the timescale separation. The singular limit corresponds to $\epsilon = 0$.

\begin{definition}[Critical manifold]
	Given system (\ref{fssys}) with $\epsilon=0$, 
	\[C= \left\{ (\mathbf{y},\mathbf{z})\in \mathds{R}^m \times \mathds{R}^n : \bm{f}(\mathbf{y},\mathbf{z}) = \mathbf{0} \right\}\]
	 is called the {\em critical manifold}.
\end{definition}
\begin{definition}[Fold curve \cite{kuehn2015multiple}]
\label{foldcurve}
Given system (\ref{fssys}), denote the curve of points in $C$ that are not normally hyperbolic as $L = \left\{(\mathbf{y},\mathbf{z})\in C : D_\mathbf{y}\bm{f}(\mathbf{y},\mathbf{z}) = 0, D_{\mathbf{y}\mathbf{y}}\bm{f}(\mathbf{y},\mathbf{z}) \neq 0\right\}$. $L$ is called the \emph{fold curve}. The singularities on the fold curve are called {\em folded singularities}.
\end{definition}
\begin{definition}[Canard] 
A solution of (\ref{fssys}) is called a \emph{canard} if it stays within $\mathscr{O}(\epsilon)$ of a repelling branch of the critical manifold for a time that is $\mathscr{O}(1)$ on the slow timescale, $\tau_1=t\epsilon$.
\end{definition}
\begin{definition}[Mixed mode oscillation (MMO) \cite{kuehn2015multiple}]
Periodic solutions of (\ref{fssys}) with peaks of substantially different amplitudes are called {\em MMOs}. Canard solutions often comprise the small oscillations present in MMOs. 
\end{definition}

To find canards in system (\ref{fssys}), we first examine the dynamics of (\ref{fssys}) in the singular limit. A singular canard\cite{wechselberger2012propos} is a solution that crosses from the attracting to repelling branch of the critical manifold through a folded singularity. In the singular limit, a singular canard predicts\cite{wechselberger2012propos} a family of canard solutions in (\ref{fssys}). An example of a canard solution and MMOs found in the directed two-FN system is shown in Figure \ref{canardMMO}.

\begin{figure}[hb!]
	\centering
	\includegraphics[width =0.25\textwidth]{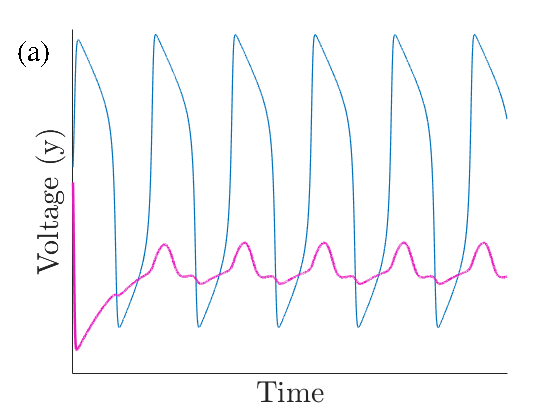}\quad
	\includegraphics[width =0.25\textwidth]{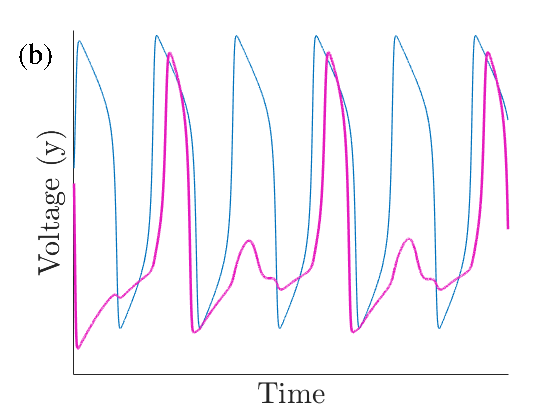}
	\caption{Example of canard solutions and MMOs observed in simulation of the directed two-FN system. In both plots $y_{A}$ is in blue and $y_{B}$ is in magenta. (a) For $I = 1$ and $\gamma=0.05$, $y_{B}$ follows a canard solution exhibited as small oscillations with the same frequency as the firing of model neuron $A$. (b) For $I = 1$ and $\gamma=0.08$, $y_{B}$ exhibits MMOs where the small oscillations are a canard solution.}
	\label{canardMMO}
\end{figure}

The remainder of this section details the calculations involved in the transformation of (\ref{FN2}) into a two-dimensional desingularized system, bifurcation and phase plane results for the desingularized system, and computation of the types of bifurcations. We implement methods that have been developed to determine the presence of canards and MMOs in general systems with $m$ fast variables and $n$ slow variables \cite{desroches2012mixed} and methods that have been applied to similar problems in two-FN systems \cite{freire2011stern,krupa2014weakly,ginoux2015canards}. In Section \ref{mainres}, we prove that solutions are canards during the onset of firing for sufficiently large coupling strength.

\subsection{The critical manifold and fold curve of the directed two-FN system}

In the slow timescale $\tau_1=t\epsilon$ system \eqref{FN2} becomes
\begin{equation}
\begin{aligned}
\epsilon\;\dfrac{d{y}_A}{d\tau_1} &= y_A-\frac{y_A^3}{3}-a -z_A +I,
\\
\epsilon\;\dfrac{d{y}_B}{d\tau_1} &= y_B-\frac{y_B^3}{3} -a- z_B+\gamma (y_A-y_B),
\\	
\dfrac{d{z}_A}{d\tau_1} &= y_A-b z_A,
\\
\dfrac{d{z}_B}{d\tau_1} &= y_B-b z_B. 
\end{aligned}
\label{eq1_slow_time_scale}
\end{equation}
The critical manifold, $C$, can be found by setting $\epsilon = 0$ in the fast two equations:
\begin{align}
C &= \left\{y_A, y_B, z_A, z_B
\left|\begin{aligned}
\zeta_A(y_A, y_B, z_A, z_B) = 0,\;  \zeta_B(y_A, y_B, z_A, z_B) = 0
\end{aligned}\right. \right\},
\end{align}	
where $\zeta_A$ and $\zeta_B$ are defined in \eqref{FN2}.

We denote by $S_{0}$ a compact submanifold of the smooth manifold $C$. $S_0$ is ``normally hyperbolic" if the eigenvalues of the Jacobian evaluated at each point $p \in S_0$ are nonzero. We can select a compact submanifold of $C$ that does not contain the singular points and so will be normally hyperbolic  \cite{kuehn2015multiple}. 

By Fenichel's Theorem, the slow dynamics of the two-FN system (\ref{FN2}) will lie $\mathscr{O}(\epsilon)$ away from $S_0$, on a normally hyperbolic slow invariant manifold $S_\epsilon$ with the same stability properties as $S_0$. \cite{kuehn2015multiple, wechselberger_canard_2013}

The fold curve is the set of singular points of $C$ as defined in Definition \ref{foldcurve}, or the points where the determinant of the Jacobian with respect to $y_A$ and $y_B$ is zero. The fold curve for (\ref{eq1_slow_time_scale}) satisfies
\begin{displaymath}
\begin{aligned}
y_A-\frac{y_A^3}{3}-a -z_A +I &= 0,
\\
y_B-\frac{y_B^3}{3} -a- z_B+\gamma (y_A-y_B) &= 0,
\\
(1-y_A^2)(1-y_B^2-\gamma) &= 0.
\end{aligned}
\end{displaymath}

\subsection{The singular limit of the directed two-FN system}
In the singular limit of (\ref{eq1_slow_time_scale}) when $\epsilon = 0$, the reduced two-FN system is 
\begin{subequations}\label{sing_lim}
\begin{align}
0 &= y_A-\frac{y_A^3}{3}-a -z_A +I, \label{sing_lima}\\
0 &= y_B-\frac{y_B^3}{3} -a- z_B+\gamma (y_A-y_B), \label{sing_limb}\\	
\dfrac{d z_A}{d\tau_1}&= y_A-b z_A, \label{sing_limc}\\
\dfrac{d z_B}{d\tau_1}&= y_B-b z_B. \label{sing_limd}
\end{align}
\end{subequations}

Following Chapter 19 in Ref. \cite{kuehn2015multiple}, we differentiate \eqref{sing_lim} with respect to $\tau_1$ to get
\begin{equation}
(D_\mathbf{y} \bm{\zeta}) \cdot \dfrac{d \mathbf{y}}{d\tau_1}
+
(D_\mathbf{z} \bm{\zeta}) \cdot \dfrac{d \mathbf{z}}{d\tau_1}
=
\mathbf{0},
\label{Dff_11}
\end{equation}
where $\bm{\zeta} = \left(\begin{array}{c} \zeta_A \\ \zeta_B \end{array}\right)$, $\mathbf{y} = \left(\begin{array}{c} y_A \\ y_B \end{array}\right)$ and $\mathbf{z} = \left(\begin{array}{c} z_A \\ z_B \end{array}\right)$. Using (\ref{sing_limc})-(\ref{sing_limd}), we can rewrite \eqref{Dff_11} as
\begin{align}
\det{(D_\mathbf{y} \bm{\zeta})}\dfrac{d \mathbf{y}}{d\tau_1} 
&= 
- \text{adj}(D_\mathbf{y} \bm{\zeta}) D_\mathbf{z}\bm{\zeta} \cdot (\mathbf{y}-b\mathbf{z}),
\label{intermediate}
\end{align}
where
\[D_\mathbf{y} \bm{\zeta} = \left(\begin{array}{cc} 1-y_{A}^2 & 0 \\ \gamma & 1-y_{B}^2-\gamma\end{array}\right),\; 
D_\mathbf{z} \bm{\zeta} = - \mathbf{I}_{2\times2},\]
 and $\mathbf{I}_{2\times2}$ is the identity matrix. The determinant of $D_\mathbf{y} \bm{\zeta}$ is zero at $y_{A*} = \pm 1$ and $y_{B*}= \pm \sqrt{1-\gamma}$.
 
 The two-dimensional reduced system (\ref{intermediate}) can be examined, as follows, by rescaling time to obtain the desingularized system described in Ref. \cite{wechselberger_canard_2013}.

\subsection{Desingularization of the directed two-FN system}
\label{desing_section}

Rescaling time in \eqref{intermediate} by $\tau_1 = -\det{(D_\mathbf{y} \bm{\zeta})}\tau_2$ yields the desingularized system:
\begin{equation}
\begin{aligned}
\dfrac{d y_A}{d\tau_2} 
&= \rho_1(y_A, y_B) 
= (1-y_{B}^2-\gamma)(y_{A} -bz_{A} ),
\\
\dfrac{d y_B}{d\tau_2} 
&= \rho_2(y_A,y_B) 
= -\gamma(y_{A} -bz_{A} )+(1-y_{A}^2)(y_{B}-bz_{B}),
\end{aligned}
\label{desing}
\end{equation}
where $z_A$ and $z_B$ are defined as
\begin{equation}
\begin{aligned}
z_A&= y_{A}-\frac{y_{A} ^3}{3} -a+I,
\\
z_B &= y_{B}-\frac{y_{B}^3}{3}-a+\gamma (y_{A} -y_{B})\label{zero_p}.
\end{aligned}
\end{equation}

The singularities of \eqref{intermediate} are the points where $\text{adj} (D_\mathbf{y} \bm{\zeta})(D_\mathbf{z} \bm{\zeta}) \cdot (\mathbf{y}-b\mathbf{z})=\mathbf{0}$. There are two types of singularities \cite{wechselberger_canard_2013} that are present in \eqref{desing}. The first is denoted an \textit{ordinary singularity}, which arises when $\det(D_\mathbf{y}\bm{\zeta})\neq 0$.
The second type of singularity is the \textit{folded singularity}, which arises when $\det{(D_\mathbf{y} \bm{\zeta})}=0$.

The ordinary singularities of \eqref{desing} correspond to the equilibrium of \eqref{intermediate}, which is the equilibrium of the original system \eqref{FN2} in the $\mathbf{y}$ plane. In Figure \ref{ordinary}, the signs of the eigenvalues associated with the ordinary singularity are depicted as a function of $I$ and $\gamma$. These show stability changes in \eqref{desing} that correspond to Hopf bifurcations in \eqref{FN2}. The curves in $I$-$\gamma$ space that delineate the different signs of the real parts of the eigenvalues are slightly different from the Hopf bifurcation curves because predictions from singular perturbation analysis are accurate up to $\mathscr{O}(\epsilon)$. In regions where $A$ and $B$ are both quiescent or saturated (dark gray), both eigenvalues have negative real parts and the ordinary singularity is a stable equilibrium. In regions where both $A$ and $B$ are firing (light blue), the real parts of both eigenvalues are positive and the ordinary singularity is an unstable equilibrium. In regions where either $A$ or $B$ is firing (light gray), the ordinary singularity is a saddle.

\begin{figure}[h]
\begin{center}
	\includegraphics[width =0.5\textwidth ]{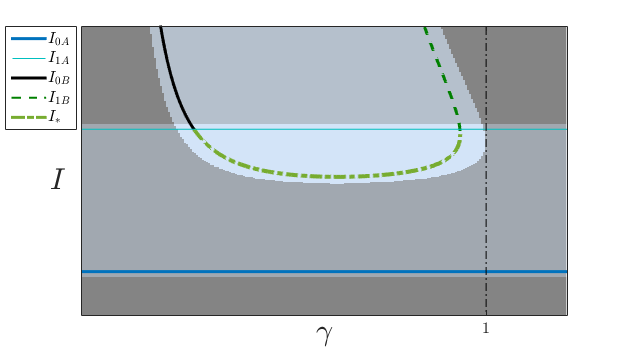}
	\caption{Regions in the $I$-$\gamma$ parameter space distinguishing local stability of the ordinary singularity in the desingularized system (\ref{desing}). Dark gray indicates two negative eigenvalues, light blue indicates two positive eigenvalues, and light gray indicates one positive and one negative eigenvalue. The Hopf bifurcations and distinguishing features of the original two-FN system \eqref{FN2}, the boundaries in Figure \ref{Igamma_bifn}, are plotted for comparison as five curves.} \label{ordinary}
\end{center}
\end{figure}

The folded singularities in \eqref{desing}  correspond to singularities on the fold curve in \eqref{FN2}, and they satisfy 
\begin{displaymath}
\det{(D_\mathbf{y} \bm{\zeta})} = 0,
\quad \textrm{and} \quad 
\text{adj} (D_\mathbf{y} \bm{\zeta})(D_\mathbf{z} \bm{\zeta}) \cdot (\mathbf{y}-b\mathbf{z}) = \mathbf{0},
\end{displaymath}
equivalently
\begin{subequations}
\label{foldedsing}
\begin{align}
(1-y_{B*}^2-\gamma)(1-y_{A*}^2) &= 0,
\label{foldedsinga}
\\
(1-y_{B*}^2-\gamma)(y_{A*} -bz_{A*} ) &= 0,
\label{foldedsingb}
\\  
(1-y_{A*}^2)(y_{B*}-bz_{B*}) -\gamma(y_{A*} -bz_{A*} ) &= 0.
\label{foldedsingc}
\end{align}
\end{subequations}
\eqref{foldedsinga} is satisfied when $y_{A*} = \pm 1$ or  $y_{B*} = \pm \sqrt{1-\gamma}$.

When $y_{A*} = \pm 1$, solving \eqref{foldedsingb}-\eqref{foldedsingc} results in $y_{A*} = bz_{A*}$, or $\gamma = 0$ and $y_{B*} = \pm 1$. Combining $y_{A*} = bz_{A*}$ with (\ref{zero_p}) results in $I = \pm \frac{1}{b}\mp\frac{2}{3}+a$, which is independent of $\gamma$ and so we do not consider this case further.

When $y_{B*} = \pm \sqrt{1-\gamma}$, we use \eqref{zero_p} to solve \eqref{foldedsingc} for $y_{A*}$, which is equivalent to solving the cubic equation
\begin{equation}
\beta_3 y_{A*}^3+\beta_2 y_{A*}^2 +\beta_1 y_{A*}+\beta_0 
= 0,
\label{cubic-4-yaStar}
\end{equation}
where
\begin{align*}
\beta_0 
&= 
b\gamma(I-a)+y_{B*}+b\left(-y_{B*}+\frac{y_{B*}^3}{3}+a+\gamma y_{B*}\right),
\\
\beta_1
&=
- b\gamma,
\\
\beta_2
&=
-y_{B*}+b\left(y_{B*}-\frac{y_{B*}^3}{3}-a-\gamma y_{B*}\right),
\\
\beta_3
&=
\frac{2b \gamma}{3}.
\end{align*}

The solutions of \eqref{cubic-4-yaStar} for $y_{A*}$ as a function of $\gamma$ and $I$ are:
\begin{align*}
y_{A*,k} &= -\dfrac{1}{3\beta_3}\left(\beta_2+C_k+\dfrac{\beta_2^2-3\beta_1 \beta_3}{C_k}\right),\end{align*}
where, for  $k = 1, 2, 3$,
\begin{align*}
C_k &= \left(\frac{\sqrt{-3}-1}{2}\right)^{k-1}\left(\dfrac{\sigma-\sqrt{-27\beta_3^2 \Delta}}{2}\right)^{1/3},\\
\Delta &= 18\beta_3\beta_2\beta_1\beta_0-4\beta_0\beta_2^3+\beta_2^2\beta_1^2-4\beta_3\beta_1^3-27\beta_3^2\beta_0^2,\\
\sigma&=2\beta_2^2 -9\beta_3 \beta_2 \beta_1+27 \beta_3^2 \beta_0.
\end{align*}

If $\Delta>0$, there are three real solutions and, if $\Delta<0$, there is one real solution.

The Jacobian of \eqref{desing} for the folded singularities with $y_{B*} = \pm \sqrt{1-\gamma}$ has the form:
\[D_\mathbf{y} \bm{\rho} (y_{A*}, y_{B*}) = \left(\begin{array}{cc}
0 & -2y_{B*}\xi_A\\
-\gamma-2y_{A*}\xi_B & 1-y_{A*}^2 \\
\end{array}\right),\]
where $\bm{\rho} = (\rho_1, \rho_2)^\top$, $\xi_A = y_{A*}-bz_{A*}$, and $\xi_B = y_{B*} -b z_{B*}$.
The trace and determinant of the Jacobian are used to classify each singularity. They are given by
\begin{align*}
\Tr{(D_\mathbf{y} \bm{\rho} (y_{A*}, y_{B*}))} 
&= 1 - y_{A*}^2,
\\
\det{(D_\mathbf{y} \bm{\rho} (y_{A*}, y_{B*}))} 
&= -2y_{B*}\xi_A\left(\gamma+2y_{A*}\xi_B\right).
\end{align*}
When $\det{(D_\mathbf{y} \bm{\rho} (y_{A*}, y_{B*}))} >0$, the eigenvalues have the same sign, so the singularity is a folded node. The stability of the node can be determined by looking at the sign of the trace. When $\det{(D_\mathbf{y} \bm{\rho} (y_{A*}, y_{B*}))}<0$, the eigenvalues have opposite signs, and the singularity is a folded saddle.

In Figure \ref{evals_folded}, the signs of the eigenvalues associated with the folded singularities are depicted as a function of $I$ and $\gamma$. The white regions in this figure correspond to values of $I$ and $\gamma$ where the desingularized system (\ref{desing}) has a single equilibrium point (i.e. $\Delta < 0$).

\begin{figure*}
\begin{center}
	\includegraphics[width=0.8\textwidth]{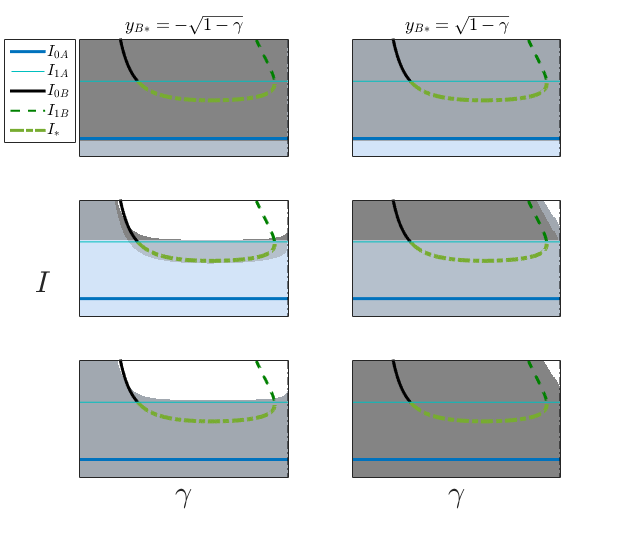}
	\caption{Regions in the $I$-$\gamma$ parameter space distinguishing local stability of the three folded singularities corresponding to $y_{B*} =-\sqrt{1-\gamma}$ (top, middle, and bottom plots on the left) and the three folded singularities corresponding to $y_{B*} =\sqrt{1-\gamma}$ (top, middle, and bottom plots on the right). In white regions, the equilibrium point does not exist. In dark gray regions, both eigenvalues are negative. In light gray regions, the eigenvalues have opposite signs. In light blue regions, both eigenvalues are positive.}\label{evals_folded}
\end{center}
\end{figure*}

\label{classification_sing}

\section{Dynamics by region}
\label{mainres}

In this section, we apply the analytical results for the desingularized system \eqref{desing} to draw conclusions about the original system \eqref{FN2}. In so doing we provide details of the computations used to produce Figure \ref{Igamma_bifn} and the characterization of each of the seven regions, as described in Section~\ref{twoFN}. We prove the stability of limit cycles of the Hopf bifurcations in model neuron $B$. 
 We prove necessary conditions for MMOs and sufficient conditions for phase locking in terms of $I$ and $\gamma$. 

In the following proposition, we compute the value of the unique equilibrium point (\ref{FN2}), when Assumption \ref{assumption} holds, and its stability as a function of $I$ and $\gamma$.

\begin{proposition} \label{original_eq}
  Consider the directed two-FN system (\ref{FN2}) and let Assumption \ref{assumption} hold. For any fixed $I$ and $\gamma$, there exists a unique equilibrium point denoted by $\mathbf{p}_*=(y_{A*}(I), z_{A*}(I), y_{B*}(I,\gamma), z_{B*}(I,\gamma))$. Then,
\begin{enumerate}
\setlength{\itemsep}{0pt}
\setlength{\parskip}{0pt}
\item 
$\mathbf{p}_*$ is nonhyperbolic if $I$ and $\gamma$ satisfy 
\begin{equation}
\begin{aligned}
\sigma_1(I,\gamma) &= 1-b\epsilon-y_{A*}^2 = 0,
\\
\text{or} \qquad 
\sigma_2(I,\gamma) &= 1-b\epsilon-\gamma-y_{B*}^2 = 0.
\end{aligned}
\label{cond_hyp}
\end{equation}
\item 
$\mathbf{p}_*$ is hyperbolic if $I$ and $\gamma$ do not satisfy (\ref{cond_hyp}). If $\sigma_1(I,\gamma)<0$ and $\sigma_2(I,\gamma)<0$, $\mathbf{p}_*$ is attracting. If $\sigma_1(I,\gamma)>0$ and $\sigma_2(I,\gamma)>0$, $\mathbf{p}_*$ is repelling. If $\sigma_1(I,\gamma)\sigma_2(I,\gamma)<0$,  $\mathbf{p}_*$ is a saddle.
\end{enumerate}
\end{proposition}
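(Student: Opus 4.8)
The argument splits into three essentially independent pieces: (a) reduce the equilibrium equations of \eqref{FN2} to two decoupled scalar cubics and get uniqueness by monotonicity; (b) observe that, because the coupling is unidirectional $A\to B$, the Jacobian at $\mathbf{p}_*$ is block lower-triangular, so its spectrum is the union of the spectra of two $2\times2$ blocks; (c) for each block, show the determinant is strictly positive, so the trace alone controls hyperbolicity and the signs of the real parts of the eigenvalues. Parts 1 and 2 of the statement then drop out by combining signs.

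\textbf{Existence and uniqueness.} Setting the right-hand sides of \eqref{FN2} to zero, equations \eqref{FN2b} and \eqref{FN2d} give $z_{A*}=y_{A*}/b$ and $z_{B*}=y_{B*}/b$; substituting into \eqref{FN2a} and \eqref{FN2c} yields
\[
-\tfrac13 y_{A*}^3+\bigl(1-\tfrac1b\bigr)y_{A*}+(I-a)=0,
\qquad
-\tfrac13 y_{B*}^3+\bigl(1-\tfrac1b-\gamma\bigr)y_{B*}+(\gamma y_{A*}-a)=0.
\]
By \cref{assumption}, $0<b<1$, hence $1-\tfrac1b<0$, and since also $\gamma\ge 0$, the derivatives of these cubics, $-y^2+(1-\tfrac1b)$ and $-y^2+(1-\tfrac1b-\gamma)$, are everywhere negative. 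Each cubic is therefore strictly decreasing and has exactly one real root: the first determines $y_{A*}$ (and $z_{A*}$) as a function of $I$ only, and feeding that value into the second determines $y_{B*}$ (and $z_{B*}$) as a function of $I$ and $\gamma$. This produces the unique equilibrium $\mathbf{p}_*$.

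\textbf{Block-triangular Jacobian and the two blocks.} Ordering the state as $(y_A,z_A,y_B,z_B)$, the Jacobian $J$ of \eqref{FN2} at $\mathbf{p}_*$ is block lower-triangular, since $(y_A,z_A)$ does not depend on $(y_B,z_B)$:
\[
J=\begin{pmatrix} J_A & \mathbf{0}\\[2pt] \begin{smallmatrix}\gamma & 0\\ 0 & 0\end{smallmatrix} & J_B\end{pmatrix},
\qquad
J_A=\begin{pmatrix} 1-y_{A*}^2 & -1\\ \epsilon & -\epsilon b\end{pmatrix},
\qquad
J_B=\begin{pmatrix} 1-y_{B*}^2-\gamma & -1\\ \epsilon & -\epsilon b\end{pmatrix}.
\]
Thus the spectrum of $J$ is the union of the spectra of $J_A$ and $J_B$. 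A direct computation gives $\Tr J_A=1-b\epsilon-y_{A*}^2=\sigma_1(I,\gamma)$, $\det J_A=\epsilon\bigl(1-b+b\,y_{A*}^2\bigr)$, and $\Tr J_B=1-b\epsilon-\gamma-y_{B*}^2=\sigma_2(I,\gamma)$, $\det J_B=\epsilon\bigl(1-b+b\gamma+b\,y_{B*}^2\bigr)$. Since $0<b<1$ and $\gamma\ge 0$, both determinants are strictly positive for all $(I,\gamma)$. A $2\times2$ matrix with positive determinant has no zero eigenvalue and has both eigenvalues' real parts of the same sign as its trace; consequently $J$ has a purely imaginary eigenvalue exactly when $\Tr J_A=0$ or $\Tr J_B=0$, i.e. when \eqref{cond_hyp} holds (part 1), and otherwise the eigenvalues from $J_A$ (resp. $J_B$) have real parts of sign $\sgn\sigma_1$ (resp. $\sgn\sigma_2$). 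Reading off the three cases $\sigma_1,\sigma_2<0$, $\sigma_1,\sigma_2>0$, $\sigma_1\sigma_2<0$ gives attracting, repelling, and saddle respectively (part 2).

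\textbf{Main obstacle.} There is no serious difficulty; the argument is mostly bookkeeping. The one point that must be handled carefully is verifying that $\det J_A$ and $\det J_B$ remain strictly positive over the entire admissible parameter range — this is precisely what \cref{assumption} ($0<b<1$) together with $\gamma\ge 0$ secures, and it is what lets the trace be the sole indicator of (in)stability. One should also take care to place the coupling block in the correct (lower-left) corner so that the triangular reduction of the spectrum is valid.
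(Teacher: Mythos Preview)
Your proof is correct and follows essentially the same approach as the paper: exploit the block lower-triangular structure of the Jacobian induced by the unidirectional coupling, and read off hyperbolicity and stability from the two $2\times2$ diagonal blocks. Your version is in fact slightly more careful than the paper's --- you establish uniqueness by monotonicity of the cubics rather than simply writing down Cardano's formula, and you explicitly verify $\det J_A,\det J_B>0$ (using $0<b<1$, $\gamma\ge 0$) so that the trace alone controls the signs of the real parts, a point the paper leaves implicit when it asserts that ``the sign of the real part of the eigenvalues will be determined by the sign of the first term.''
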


\begin{proof}

 Solving for the equilibrium point of (\ref{FN2}), we first compute
 $y_{A*}$ as a function of $I$ as
 \begin{align}
 y_{A*} &= 
 { 
 \left(\frac{3(I-a)}{2} + \sqrt{\frac{(3(I-a))^2}{4}+\tilde{b}^3}\right)^{1/3}
} 
+\left(\frac{3(I-a)}{2} - \sqrt{\frac{(3(I-a))^2}{4}+\tilde{b}^3}\right)^{1/3}
\hspace{-1.5em},
\label{yastar}
\end{align}
where $\tilde{b} = \frac{1}{b}-1$. Similarly, by leveraging \eqref{yastar}, we can write $y_{B*}$ as the following function of $I$ and $\gamma$,
{\small
\begin{align}
y_{B*} &= \left(\frac{3(\gamma y_{A*}-a)}{2} + \sqrt{\frac{9(\gamma y_{A*}-a)^2}{4}+\left(\tilde{b}+\gamma\right)^3}\right)^{1/3}  
+\left(\frac{3(\gamma y_{A*}-a)}{2} - \sqrt{\frac{9(\gamma y_{A*}-a)^2}{4}+\left(\tilde{b}+\gamma\right)^3}\right)^{1/3}
\hspace{-1.5em}.	
\label{ybstar}
\end{align}
}
Then $z_{A*}= \frac{1}{b}y_{A*}$ and $z_{B*} = \frac{1}{b}y_{B*}$.

We compute the linearization of (\ref{FN2}) around $\mathbf{p}_*$. We let $\bm{\nu}_A = (y_A, z_A)$, $\bm{\nu}_B = (y_B, z_B)$, and $\bm{\xi} = (\zeta_A, \epsilon\xi_A,\zeta_B,\epsilon\xi_B)$. The Jacobian of \eqref{FN2} evaluated at $p_*$ is
\begin{displaymath}
D_{(\bm{\nu}_A,\bm{\nu}_B)}\bm{\xi}({\mathbf{p}_*}) = \left(\begin{array}{cccc}
1 - y_{A*}^2 & -1 & 0 & 0\\
\epsilon & -b \epsilon & 0 & 0\\
\gamma & 0 & 1-y_{B*}^2-\gamma & -1 \\
0 & 0 & \epsilon & -b \epsilon
\end{array}\right).
\label{Jac}
\end{displaymath}

The linearization is block triangular, so the eigenvalues of the Jacobian are union of the eigenvalues of diagonal blocks. This means that local stability can be determined through linearization of each FN model separately. The eigenvalues for the first and second blocks are
\begin{align*}
\lambda_{1,2} &= \frac{1}{2}\left(1 - y_{A*}^2 -b \epsilon\right)
\pm \frac{1}{2}\sqrt{(1 -y_{A*}^2 -b \epsilon)^2-4 \epsilon (1-b+y_{A*}^2b)},
\\
\lambda_{3,4} &= \frac{1}{2}\left(1 - y_{B*}^2 - \gamma -b \epsilon\right)
\pm \frac{1}{2}\sqrt{(1 -y_{B*}^2 -\gamma -b \epsilon)^2-4 \epsilon (1-b+y_{B*}^2b +\gamma)}.
\end{align*}

The sign of the real part of the eigenvalues will be determined by the sign of the first term. The first term of $\lambda_{1,2}$ is zero when $\sigma_1(I,\gamma) = \lambda_1 + \lambda_2 = 1 - b\epsilon - y_{A*}^2=0$. The first term of $\lambda_{3,4}$ is zero when $\sigma_2(I,\gamma) = \lambda_3 + \lambda_4 = 1-b\epsilon-\gamma - y_{B*}^2=0$.  Thus, $\mathbf{p}_*$ is nonhyperbolic when $\sigma_1 = 0$ or $\sigma_2=0$.

The stability of $\mathbf{p}_*$ when $\sigma_1 \sigma_2 \neq 0$ is derived from the signs of the eigenvalues of $D_{(\bm{\nu}_A,\bm{\nu}_B)}\bm{\xi}$.
\end{proof}
\begin{remark}
The one-dimensional manifolds of nonhyperbolic equilibrium points in $(I,\gamma)$ space, $\{(I,\gamma):\sigma_1(I,\gamma)=0 \text{ or } \sigma_2(I,\gamma)=0 \}$, correspond to the points where $A$ and $B$ undergo Hopf bifurcations.
\end{remark}
%
%
%

%%
%%%%%%%%%%%%%%%%%%%%%%%%%%%%%%%%%%%%%%%%%%%%%%%%%%%%%%%%%%%%%%%%%%%%%%%%%%%%%%%%%%%%%%%%%%%%
\subsection{Quiescence: Region (1)}\label{quiescence}
%%%%%%%%%%%%%%%%%%%%%%%%%%%%%%%%%%%%%%%%%%%%%%%%%%%%%%%%%%%%%%%%%%%%%%%%%%%%%%%%%%%%%%%%%%%%
%
Given the two-FN system (\ref{FN2}) and Assumption \ref{assumption}, if $I<I_{0A}$ then $A$ converges to a single stable equilibrium point, $(y_{A*},z_{A*})$, which is quiescent. The value $y_{A*}$, independent of $\gamma$, is too low to induce firing in $B$, i.e., $B$ is quiescent. The phase plane and nullclines for an $(I,\gamma)$ pair in this regime are shown in Figure \ref{R1}. 

To fully understand the behavior of $B$, we examine the desingularized system \eqref{desing}, which has seven singularities for the parameter values in region (1). These include one stable ordinary singularity, one stable folded singularity, two unstable folded singularities, and three saddle folded singularities (only five of the seven singularities are visible in Figure~\ref{R1}). Due to the presence of the unstable folded singularities in \eqref{desing}, robust families of canards that compose  small oscillations of $B$ could arise in the original system \eqref{FN2} in region (1).
\begin{figure}[t!]
\begin{center}
\includegraphics[width =0.25\textwidth ]{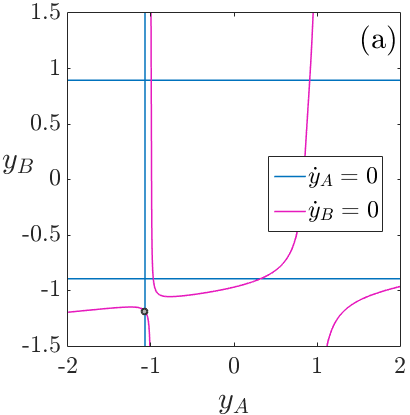}\qquad
\includegraphics[width =0.25\textwidth ]{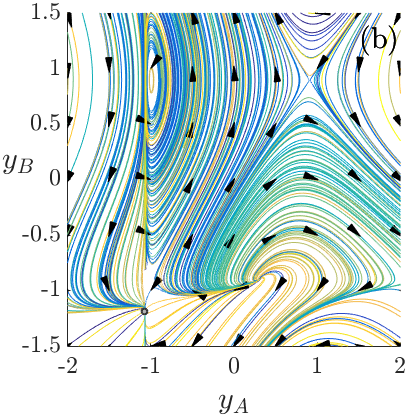}
\caption{
Nullclines (a) and phase plane (b) for the desingularized system \eqref{desing} for parameter values $I = 0.2$ and $\gamma = 0.2$ in region (1), where both $A$ and $B$ are quiescent in the two-FN system \eqref{FN2}. The gray circle indicates the position of the ordinary singularity. Note that $\dot{y}_A$ and $\dot{y}_B$ represent $\frac{dy_A}{d\tau_2}$ and $\frac{dy_B}{d\tau_2}$, respectively.
}
\label{R1}
\end{center}
\end{figure}
%
%
%

%%
%%%%%%%%%%%%%%%%%%%%%%%%%%%%%%%%%%%%%%%%%%%%%%%%%%%%%%%%%%%%%%%%%%%%%%%%%%%%%%%%%%%%%%%%%%%%
\subsection{Hopf bifurcations in $B$: Regions (2), (3), and (4)}
\label{abovebd}
%%%%%%%%%%%%%%%%%%%%%%%%%%%%%%%%%%%%%%%%%%%%%%%%%%%%%%%%%%%%%%%%%%%%%%%%%%%%%%%%%%%%%%%%%%%%
%
For the two-FN system (\ref{FN2}), if $I>I_{1A}$ then $A$ is saturated. We prove conditions for when $B$ will be quiescent, firing, or saturated and provide illustrative examples of the desingularized system nullclines and phase plane for representative parameter values.

\begin{proposition}
Consider the two-FN system (\ref{FN2}) and Assumption \ref{assumption}.	Let $I>I_{1A}$ and $\gamma<1-b\epsilon$. There exist two curves of Hopf bifurcations defined by
\begin{align*}
I_{1B,0B}(\gamma) &=  \tilde{b}y_{A*\pm}+\dfrac{y_{A*\pm}^3}{3}+a,
\end{align*}
where $y_{A*\pm}$ is
{\small
\begin{align*}
y_{A*\pm} = \pm\frac{1}{\gamma}\left(\frac{1}{3} (1-\epsilon b -\gamma)^{3/2}+\left(\tilde{b}+\gamma\right)\sqrt{1-\epsilon b -\gamma}+a\right).
\end{align*}
}$B$ transitions from quiescent to firing through a Hopf bifurcation at $I=I_{0B}(\gamma)$ and from firing to saturated through another Hopf bifurcation at $I=I_{1B}(\gamma)$.

Moreover, there exists $\gamma_*$ such that, for $I<I_{1B}(\gamma_*)$, the following holds. If $\gamma<\gamma*$, the Hopf bifurcation at $I_{0B}(\gamma)$ is supercritical and, if $\gamma>\gamma*$, the Hopf bifurcation at $I_{1B}(\gamma)$ is subcritical.
\end{proposition}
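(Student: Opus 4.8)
The plan is to analyze model neuron $B$ as a forced FN model, treating the output of the saturated neuron $A$ as a constant drive. Since $I>I_{1A}$, Proposition~\ref{original_eq} (together with \eqref{yastar}) tells us $A$ sits at its unique stable saturated equilibrium $(y_{A*},z_{A*})$, and $y_{A*}=y_{A*}(I)$ is a fixed number independent of $\gamma$. Substituting $y_A = y_{A*}$ into \eqref{FN2c}--\eqref{FN2d}, the $B$-subsystem is
\begin{align*}
\dfrac{dy_B}{dt} &= y_B-\frac{y_B^3}{3}-a-z_B+\gamma(y_{A*}-y_B),\\
\dfrac{dz_B}{dt} &= \epsilon(y_B-bz_B),
\end{align*}
which is itself a (shifted) FN model: rescaling $y_B$ and absorbing the linear terms, the effective linear coefficient is $1-\gamma$ and the effective external input is $\gamma y_{A*}$. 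First I would locate the equilibrium $y_{B*}$ of this subsystem, recovering \eqref{ybstar} with $y_{A*}$ frozen, and note that $z_{B*}=y_{B*}/b$.

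Next I would invoke the block-triangular structure of the Jacobian already exhibited in the proof of Proposition~\ref{original_eq}: the eigenvalues governing $B$ are $\lambda_{3,4}$, whose real parts change sign exactly when $\sigma_2(I,\gamma)=1-b\epsilon-\gamma-y_{B*}^2=0$. The Hopf condition (condition~1 of Proposition~\ref{Hopfcond}) then reads $y_{B*}^2 = 1-b\epsilon-\gamma$, i.e. $y_{B*}=\pm\sqrt{1-b\epsilon-\gamma}$ (real precisely because $\gamma<1-b\epsilon$). Plugging $y_{B*}=\pm\sqrt{1-b\epsilon-\gamma}$ into the equilibrium equation \eqref{sing_limb} (with $z_{B*}=y_{B*}/b$) and solving for the drive $\gamma y_{A*}$ gives
\[
\gamma y_{A*} = \frac{1}{3}(1-\epsilon b-\gamma)^{3/2}+(\tilde b+\gamma)\sqrt{1-\epsilon b-\gamma}+a,
\]
which is the stated formula for $y_{A*\pm}$ after dividing by $\gamma$ (the two signs come from the $\pm$ on $y_{B*}$). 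Finally, since $I\mapsto y_{A*}(I)$ is monotonic (it is the real root of a monotone cubic, as is transparent from \eqref{yastar}), I would invert $z_{A*}=y_{A*}/b$ together with \eqref{sing_lima} to write $I = \tilde b\,y_{A*}+y_{A*}^3/3+a$ at the bifurcation, yielding $I_{1B,0B}(\gamma)$; the smaller root is $I_{0B}$, the larger $I_{1B}$, and the transversality condition~2 of Proposition~\ref{Hopfcond} follows since $\partial\sigma_2/\partial I\neq0$ away from the degenerate endpoint (using that $y_{B*}$ varies monotonically with $I$ through $y_{A*}$).

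For the criticality claim, I would apply Proposition~\ref{Hopfcond}: put the $B$-subsystem at its Hopf point in the normal form \eqref{form_Hopf} and compute the cubic coefficient $\alpha$ from Definition~\ref{cubiccoeff}. Because the $B$-subsystem has the same functional form as the single FN model with $b$ replaced by an effective parameter and $1$ replaced by $1-\gamma$ in the linear $y$-term, I expect $\alpha$ to reduce to an expression of the form $\tfrac{1}{8}\big(\text{numerator}(\gamma)\big)/\big(\text{denominator}(\gamma)\big)$ whose sign changes at a single value $\gamma_*\in(0,1-b\epsilon)$, analogous to the single-neuron formula $\alpha=\tfrac18\frac{2b-b^2\epsilon-1}{1-b^2\epsilon}$. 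I would then read off: for $\gamma<\gamma_*$ the relevant $\alpha<0$ (supercritical) and for $\gamma>\gamma_*$ we get $\alpha>0$ (subcritical), with the restriction $I<I_{1B}(\gamma_*)$ ensuring we are on the correct branch ($y_{B*}$ of the correct sign) so that the two stated cases are mutually exclusive and exhaustive in the intended regime.

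\textbf{Main obstacle.} The delicate part is the criticality computation: carrying the cubic-coefficient formula \eqref{CCdefn} through for the $B$-subsystem, tracking how $\gamma$ enters both the quadratic/cubic derivatives of the nullcline and the frequency $\omega$, and then showing the resulting rational function of $\gamma$ has exactly one sign change on $(0,1-b\epsilon)$ defining $\gamma_*$. A secondary subtlety is bookkeeping the two signs of $y_{B*}$ consistently — matching $y_{A*+}$ with $I_{1B}$ and $y_{A*-}$ with $I_{0B}$ (or vice versa), and confirming via \eqref{ybstar} that exactly one of them is realized for a given $(I,\gamma)$ with $I<I_{1B}(\gamma_*)$, so that the supercritical/subcritical dichotomy is stated without overlap.
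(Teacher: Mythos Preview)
Your proposal is correct and follows essentially the same route as the paper's proof: use the block-triangular Jacobian from Proposition~\ref{original_eq} to get the Hopf locus $y_{B*}=\pm\sqrt{1-b\epsilon-\gamma}$, back-substitute through the equilibrium equations of \eqref{FN2c}--\eqref{FN2d} and then \eqref{FN2a}--\eqref{FN2b} to express $y_{A*\pm}$ and $I_{1B,0B}$, and finally apply Proposition~\ref{Hopfcond} to the shifted $B$-subsystem to decide criticality via the cubic coefficient. The paper carries out the last step explicitly rather than anticipating it, obtaining the closed form
\[
\alpha \;=\; \frac{1}{8}\cdot\frac{2b-2b\gamma-b^2\epsilon-1}{1-b^2\epsilon},
\]
so $\gamma_*$ is simply the root of the numerator and the sign analysis is immediate (linear in $\gamma$); note this gives $\alpha>0$ (subcritical) for $\gamma<\gamma_*$ and $\alpha<0$ (supercritical) for $\gamma>\gamma_*$, so your anticipated direction of the sign change is flipped. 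The ``bookkeeping'' subtlety you flag about matching signs of $y_{B*}$ to $I_{0B}$ vs.\ $I_{1B}$ is not treated any more carefully in the paper than in your sketch.
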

%%%
%%%
\begin{proof}
The Hopf bifurcations in $B$ occur at nonhyperbolic equilibrium points, which are $y_{B*\pm} = \pm \sqrt{1-\gamma-b\epsilon}$ by Proposition \ref{original_eq}. 

Substituting $y_{B*\pm}=\pm \sqrt{1-\gamma-b\epsilon}$ and \eqref{FN2d} into the equilibrium solution for \eqref{FN2c} gives the critical values
{\small
	\begin{align*}y_{A*\pm} = \pm\frac{1}{\gamma}\left(
	\frac{1}{3} (1-\epsilon b -\gamma)^{3/2}
	+\left(\tilde{b}+\gamma\right)\sqrt{1-\epsilon b -\gamma}+a\right),
	\end{align*}
}Substituting $y_{A*\pm}$ and \eqref{FN2b} into \eqref{FN2a} gives the values 
\begin{align*}
I_{1B,0B} = \tilde{b} y_{A*\pm} +\dfrac{y_{A*\pm}^3}{3}+a.
\end{align*}

For a fixed $\gamma$, we check the conditions of Proposition \ref{Hopfcond} for the bifurcation parameter $I$. First, we transform $(y_{B*},z_{B*})$ to the origin $(0,0)$, by introducing $y_0 = y_B - y_{B*}$ and $z_0 = z_B - z_{B*}$. With this change of variable, the dynamics of $B$ (\ref{FN2c}-\ref{FN2d}) can be expressed as
\begin{equation}
\begin{aligned}
\dfrac{d{y}_0}{dt} 
&= 
(1-\gamma-y_{B*}^2)y_0-\frac{y_0^3}{3}-y_0^2 y_{B*} -z_0,
\\
\dfrac{d{z}_0}{dt} &= \epsilon (y_0-b z_0), 
\end{aligned}
\label{eq2}
\end{equation}
and the Jacobian of \eqref{eq2} evaluated at the origin is 
\begin{displaymath}
J_B(0,0) = \left(\begin{array}{cc}
1-y_{B*}^2-\gamma & -1 \\
\epsilon & -b \epsilon
\end{array}\right).
\end{displaymath}
Now we apply Proposition~\ref{Hopfcond}.

\emph{Condition 1 of Proposition~\ref{Hopfcond}}:
This condition holds because $\Tr(J_B(0,0)) = 0$ at the bifurcation values $I = I_{0B}$ and $I=I_{1B}$.

\emph{Condition 2 of Proposition~\ref{Hopfcond}}:
The second condition holds as well because 
\begin{align*}
\dfrac{\partial}{\partial I}\Re(\lambda_{3,4}(I))\biggr \rvert_{I = I_{0B,1B}} \neq 0,
\end{align*}
where $\lambda_{3,4}$ are given in the proof of Proposition \ref{original_eq}. 

\emph{Condition 2 of Proposition~\ref{Hopfcond}}:
The cubic coefficient $\alpha$ of the Taylor expansion of \eqref{FN2c}--\eqref{FN2d} (Definition \ref{CCdefn}), which determines whether the Hopf bifurcation is subcritical or supercritical \cite{guckenheimer2013nonlinear}, is 
\begin{align}
	\alpha = \frac{1}{8}\left(\dfrac{2b-2b\gamma-b^2\epsilon-1}{1-b^2\epsilon}\right).\notag
\end{align}
At $\gamma=\gamma_*$, $\alpha = 0$ and $B$ undergoes a ``generalized Hopf", or Bautin, bifurcation\cite{takens1973unfoldings, golubitsky1981classification}. For $\gamma>\gamma_*$, $\alpha<0$ and the limit cycles resulting from the Hopf bifurcations are stable (supercritical). Otherwise, the limit cycles are unstable and the bifurcations are subcritical, as for $A$.
\end{proof}
\begin{figure}[b!]
\begin{center}
\includegraphics[width =0.25\textwidth ]{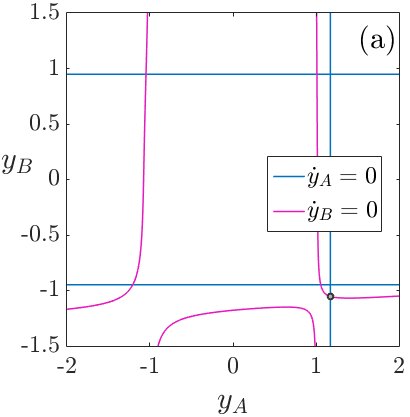}\qquad
\includegraphics[width =0.25\textwidth ]{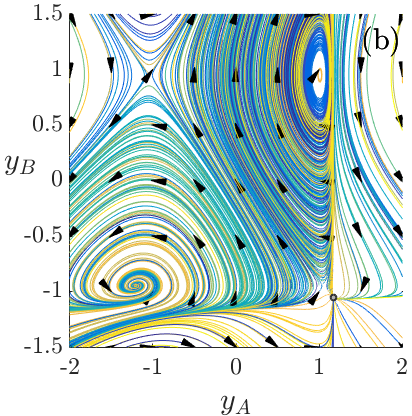}
\caption{
Nullclines (a) and phase plane (b) for the desingularized system \eqref{desing} for parameter values $I = 1.7$ and $\gamma = 0.1$ in region (2), where $A$ is saturated and $B$ is quiescent in the two-FN system \eqref{FN2}.
}\label{R2}
\end{center}
\end{figure}
\begin{remark}
For the two-FN system (\ref{FN2}), given Assumption \ref{assumption}, if $A$ is saturated, then $B$ transitions from quiescent to firing to saturated as a function of $I$ and $\gamma$.
\end{remark}

The phase plane and nullclines for \eqref{desing} in the regions where $A$ is saturated and $B$ is quiescent, firing, and saturated are plotted in Figure \ref{R2}, Figure \ref{R3}, and Figure \ref{R4}, respectively. All three use a fixed value of $I$ given by $I=1.7$. As $\gamma$ is varied, changes to the shape, position, and existence of the nullclines and qualitative differences in the phase portraits can be seen.

In region (2), there are seven singularities, of which only five are visible in Figure~\ref{R2}. The ordinary singularity is stable, and there are three stable folded nodes and three folded saddles. This is a region where $A$ is saturated and $B$ is quiescent, so the two-FN system has a unique stable equilibrium point.
\begin{figure}[t!]
\begin{center}
\includegraphics[width =0.25\textwidth ]{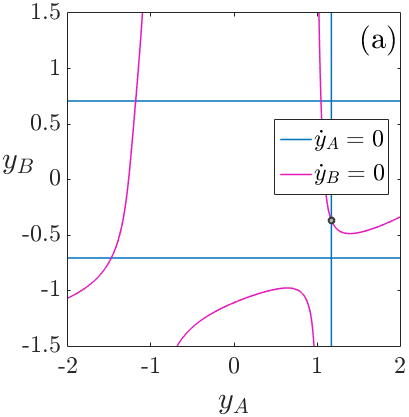}\qquad
\includegraphics[width =0.25\textwidth ]{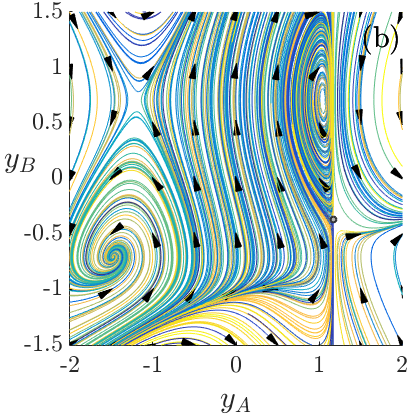}	
\caption{
Nullclines (a) and phase plane (b) for the desingularized system \eqref{desing} for parameter values $I = 1.7$ and $\gamma = 0.5$ in region (3), where $A$ is saturated and $B$ is firing in the two-FN system \eqref{FN2}.
}\label{R3}
\end{center}
\end{figure}
\begin{figure}[b!]
\begin{center}
\includegraphics[width =0.25\textwidth ]{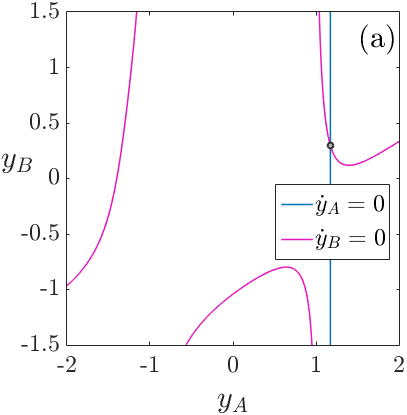}\qquad
\includegraphics[width =0.25\textwidth ]{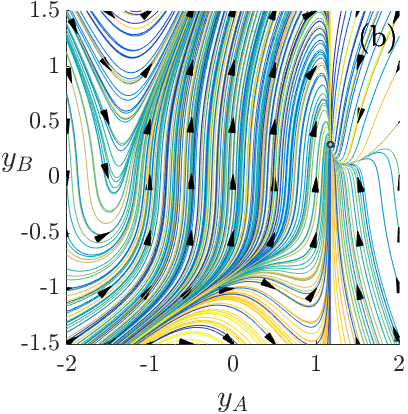}
\caption{
Nullclines (a) and phase plane (b) for the desingularized system \eqref{desing} for parameter values $I = 1.7$ and $\gamma = 1.1$ in region (4), where both $A$ and $B$ are saturated in the two-FN system \eqref{FN2}.
}\label{R4}
\end{center}
\end{figure}

In region (3) there are five singularities, of which only four are visible in Figure \ref{R3}. For low $\gamma$, there are small subregions near the boundary where there are seven singularities. The transition from seven to five singularities occurs through a saddle-node bifurcation between a folded saddle point and a stable folded node. The ordinary singularity is a saddle.

In region (4)  there can be one, three, five, or seven equilibrium points. Parameter choices above $\gamma = 1$, which corresponds to $\gamma = 1-b\epsilon$ in the two-FN system, result in a single singularity as shown in Figure \ref{R4}. Since $\gamma>1$, the folded singularities corresponding to $y_{B*} = \pm\sqrt{1-\gamma}$ no longer exist, and the ordinary singularity is a stable node.
%
%

%%
%%%%%%%%%%%%%%%%%%%%%%%%%%%%%%%%%%%%%%%%%%%%%%%%%%%%%%%%%%%%%%%%%%%%%%%%%%%%%%%%%%%%%%%%%%%%
\subsection{Phase-locking: Region (5)}
%%%%%%%%%%%%%%%%%%%%%%%%%%%%%%%%%%%%%%%%%%%%%%%%%%%%%%%%%%%%%%%%%%%%%%%%%%%%%%%%%%%%%%%%%%%%
%
Now, consider the two-FN system (\ref{FN2}) with $I_{0A}<I<I_{1A}$ and $\gamma>1-b\epsilon$. For all $I$, $\gamma$ in this regime, the linearization of (\ref{FN2}) around the equilibrium point $p_*$ has two eigenvalues $\lambda_{1,2}$ with positive real part, and two eigenvalues $\lambda_{3,4}$ with negative real part. $A$ is in the firing regime since $\Re(\lambda_{1,2})>0$. Furthermore, for all $I$, $\gamma$ under consideration, $\Re(\lambda_{3,4})<0$, so $B$ will follow the limit cycle from $A$ and the oscillators will be phase locked. 

\begin{figure}[t!]
\begin{center}
\includegraphics[width =0.25\textwidth]{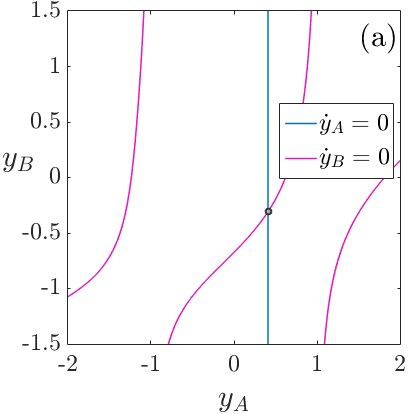}\qquad
\includegraphics[width =0.25\textwidth ]{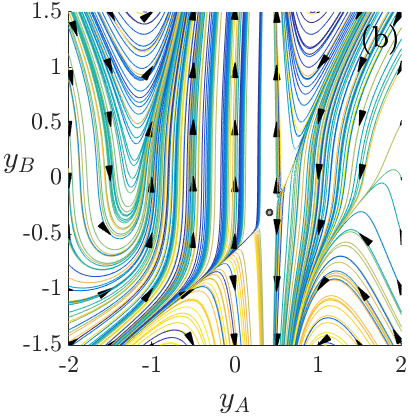}
\caption{
Nullclines (a) and phase plane (b) for the desingularized system \eqref{desing} for parameter values $I = 1.0$ and $\gamma = 1.1$ in region (5), where $A$ is firing and $B$ is phase locked with $A$ in the two-FN system \eqref{FN2}.
}\label{R5}
\end{center}
\end{figure}

\begin{remark}
For the two-FN system (\ref{FN2}), given Assumption \ref{assumption}, if $A$ is firing and $\gamma>1-b\epsilon$, $B$ is firing and $A$ and $B$ are phased locked.
\label{highg}
\end{remark}

The phase plane and nullclines for the two-dimensional, desingularized system \eqref{desing} with $I_{0A}<I<I_{1A}$ and $\gamma>1$ (region (5)) are plotted in Figure \ref{R5}. This is the same region discussed above, where $\gamma>1-b\epsilon$ becomes $\gamma>1$ in the singular limit. In this regime, the singularities at $y_{B*} = \pm\sqrt{1-\gamma}$ no longer exist because $\gamma>1$. There is a single ordinary singularity, which is a saddle. Note that there are still folded singularities corresponding to $y_{A*} = \pm 1$, but that they are the points, $I_{0A}$ and $I_{1A}$, where the stability changes in the ordinary singularity.

%
%
%

%%
%%%%%%%%%%%%%%%%%%%%%%%%%%%%%%%%%%%%%%%%%%%%%%%%%%%%%%%%%%%%%%%%%%%%%%%%%%%%%%%%%%%%%%%%%%%%
\subsection{Mixed mode oscillations: Regions (6) and (7)}
\label{MMObegin}
%%%%%%%%%%%%%%%%%%%%%%%%%%%%%%%%%%%%%%%%%%%%%%%%%%%%%%%%%%%%%%%%%%%%%%%%%%%%%%%%%%%%%%%%%%%%
%
Again consider the two-FN system (\ref{FN2}) when $I_{0A}<I<I_{1A}$ so that $A$ is firing. When $\gamma<1-b\epsilon$, a range of dynamics is observed in simulation. For $\gamma<<1$, the influence of $A$ is small, so $B$ exhibits only small oscillations that stay close to $(y_{B*}, z_{B*})$. As $\gamma$ is increased, the oscillations of $A$ can be large enough to yield a mixed mode oscillation (MMO). As $\gamma$ is increased still further and the influence of $A$ becomes increasingly strong, $B$ approaches 
firing at the same frequency as $A$.

To better understand these transitions, we study the bifurcation diagrams and phase planes of the two-dimensional desingularized system \eqref{desing}.  We prove necessary conditions for the existence of MMOs as a function of $I$ and $\gamma$.

In region (6) there are seven singularities, of which only six are visible in Figure \ref{R6}. In this region, the ordinary singularity of the desingularized system \eqref{desing} is a saddle, whereas the folded singularities include three saddles, one unstable node, and two stable nodes. Co-existence of the unstable folded singularity and the ordinary saddle singularity allows existence of canards and MMOs in the two-FN system.
\begin{figure}[t!]
\begin{center}
\includegraphics[width =0.25\textwidth ]{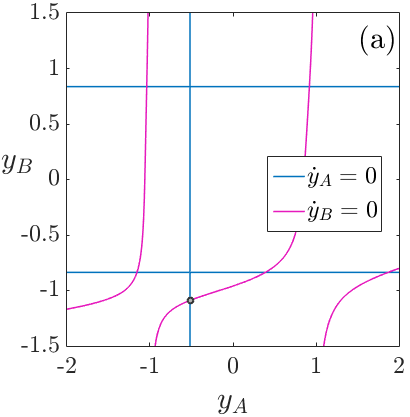}\qquad
\includegraphics[width =0.25\textwidth ]{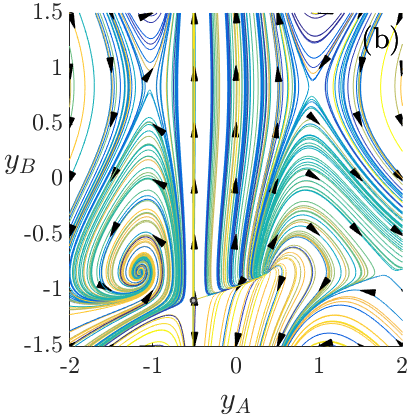}	
\caption{
Nullclines (a) and phase plane (b) for the desingularized system \eqref{desing} for parameter values $I = 0.75$ and $\gamma = 0.3$ in region (6), where $A$ is firing and $B$ may exhibit canards or MMOs in the two-FN system \eqref{FN2}.
}\label{R6}
\end{center}
\end{figure}

In region (7) there are seven singularities, of which only six are visible in Figure \ref{R7}. In this region, the ordinary singularity of the desingularized system \eqref{desing} is an unstable node, whereas the folded singularities include four saddles, and two stable nodes. As a consequence, canards and MMOs do not exist in the two-FN system, and both $A$ and $B$ are firing and phase-locked.
\begin{figure}[t!]
\begin{center}
\includegraphics[width =0.25\textwidth ]{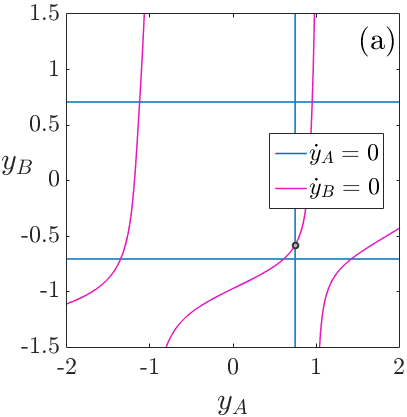}\qquad
\includegraphics[width =0.25\textwidth ]{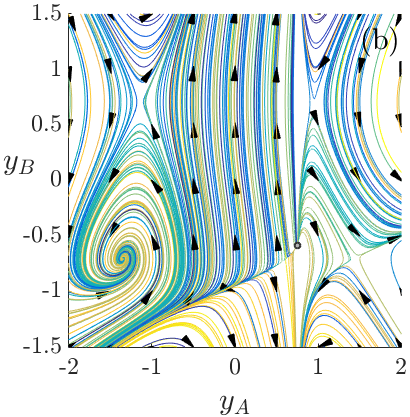}	
\caption{
Nullclines (a) and phase plane (b) for the desingularized system \eqref{desing} for parameter values $I = 1.2$ and $\gamma = 0.5$ in region (7), where $A$ is firing and $B$ phase locked with $A$ in the two-FN system \eqref{FN2}.
}\label{R7}
\end{center}
\end{figure}

We next compute the boundary between regions (6) and (7), shown by $I_{*}$ in Figure~\ref{Igamma_bifn}. The boundary is defined by points at which there is a transcritical bifurcation between the ordinary singularity and a folded singularity, called FSN type II bifurcation, where the ordinary singularity transitions from a saddle to unstable node and the folded singularity transitions from an unstable node to a saddle. This transcritical bifurcation is a known location for the onset of MMOs,  \cite{krupa2014weakly} 
so computing $I_*$ gives necessary conditions for the existence of MMOs. 
\begin{proposition}[Theorem 3.4.1 (modified) \cite{guckenheimer2013nonlinear}]
\label{transcitical:prop:GH}	
A system $\dot{\mathbf{x}} = \mathbf{f}(\mathbf{x},\mu)$, admits a transcritical bifurcation at $(\mathbf{x}_0,\mu_0)$ if
\begin{enumerate}
\setlength{\itemsep}{0pt}
\setlength{\parskip}{0pt}
\item $D_\mathbf{x} \mathbf{f}(\mathbf{x}_0,\mu_0)$ has a simple $0$ eigenvalue with right eigenvector $v$ and left eigenvector $w$.
\item $\bm{w} D^2_{\mathbf{x}\mu} \mathbf{f}(\mathbf{x}_0,\mu_0)\bm{v} \neq 0$.
\item $\bm{w} D^2_\mathbf{x} \mathbf{f}(\mathbf{x}_0,\mu_0)(\bm{v}^\top,\bm{v}^\top)^\top \neq 0$.
\end{enumerate}
\end{proposition}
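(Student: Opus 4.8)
The plan is to recognize this as the $n$-dimensional version of the scalar transcritical bifurcation theorem (Theorem~3.4.1 of \cite{guckenheimer2013nonlinear}) and to prove it by reducing to that scalar case. First I would append $\dot\mu = 0$ and apply the center manifold theorem at $(\mathbf{x}_0,\mu_0)$: by condition~(1) the extended Jacobian has a two-dimensional center eigenspace — the simple zero eigenvalue of $D_\mathbf{x}\mathbf{f}$ together with the structural zero from $\mu$ — and in the setting in which the proposition is used the remaining eigenvalue of the $2\times 2$ Jacobian of \eqref{desing} is nonzero there, so the rest of the spectrum is hyperbolic and the center manifold is a graph over $(u,\mu)$, with $u$ the coordinate along the right eigenvector $\bm v$. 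On it the dynamics reduce to $\dot u = g(u,\mu)$, $\dot\mu = 0$, with $g$ smooth, $g(0,\mu_0) = 0$ and $g_u(0,\mu_0) = 0$.

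Next I would exploit that, at an FSN~II point, an equilibrium branch genuinely persists through $\mu_0$: the ordinary singularity of \eqref{desing} exists for every $(I,\gamma)$ (it is $\mathbf y = b\mathbf z$ together with \eqref{zero_p}), so carrying it onto the center manifold and performing the standard $\mu$-dependent shift makes it the branch $u\equiv 0$, whence $g(0,\mu)\equiv 0$ and Hadamard's lemma gives a smooth factorization $g(u,\mu) = u\,h(u,\mu)$. Then $h(0,\mu_0) = g_u(0,\mu_0) = 0$, $h_u(0,\mu_0) = \tfrac{1}{2} g_{uu}(0,\mu_0)$ and $h_\mu(0,\mu_0) = g_{u\mu}(0,\mu_0)$; unwinding the center manifold / Lyapunov–Schmidt bookkeeping identifies $g_{uu}(0,\mu_0)$ with $\bm w\,D^2_\mathbf{x}\mathbf{f}(\mathbf{x}_0,\mu_0)(\bm v,\bm v)$ and $g_{u\mu}(0,\mu_0)$ with $\bm w\,D^2_{\mathbf{x}\mu}\mathbf{f}(\mathbf{x}_0,\mu_0)\bm v$ up to a lower-order correction that is inert here, so conditions~(3) and~(2) say exactly $h_u(0,\mu_0)\neq 0$ and $h_\mu(0,\mu_0)\neq 0$. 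Applying the implicit function theorem to $h$ then yields a second smooth branch of zeros $u = \phi(\mu)$ with $\phi(\mu_0) = 0$ and $\phi'(\mu_0) = -h_\mu(0,\mu_0)/h_u(0,\mu_0)\neq 0$; the branches $u = 0$ and $u = \phi(\mu)$ cross transversally at $(\mathbf{x}_0,\mu_0)$, and comparing the sign of $g_u = h + u h_u$ along them shows that they exchange stability — which is the transcritical bifurcation.

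The hard part will not be the reduction or the algebra but supplying the hypothesis that this version of the theorem has suppressed relative to \cite{guckenheimer2013nonlinear}: that an equilibrium branch persists through $\mu_0$ (equivalently, the transversality $\bm w\cdot D_\mu\mathbf{f}(\mathbf{x}_0,\mu_0) = 0$ separating the transcritical case from a saddle-node). Without it the zero set of $g$ near the origin need not contain a smooth curve and nothing forces a transcritical bifurcation. So in each application one must check, from the explicit description of the singularities of \eqref{desing}, that one of the two colliding singularities is defined and depends smoothly on the bifurcation parameter on a full neighborhood of the collision value, and simultaneously that the second eigenvalue of the Jacobian of \eqref{desing} stays away from zero there so the center manifold is one-dimensional; a minor bookkeeping step is to confirm that the correction to $g_{u\mu}$ noted above does not spoil condition~(2). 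Once these structural facts are in hand, conditions~(1)–(3) finish the proof.
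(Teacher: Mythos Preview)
The paper does not prove this proposition at all: it is quoted, without proof, as a modified version of Theorem~3.4.1 of \cite{guckenheimer2013nonlinear}, and is then simply applied in the subsequent Proposition~\ref{prop:region:6:7}. So there is no ``paper's own proof'' to compare against; your sketch via center manifold reduction to the scalar normal form is a standard and correct route to establishing such a result.

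More interesting is that you have put your finger on a real deficiency in the statement as written. The Sotomayor-type conditions for a transcritical bifurcation normally include the transversality requirement $\bm{w}\cdot D_\mu\mathbf{f}(\mathbf{x}_0,\mu_0)=0$ (equivalently, persistence of an equilibrium branch through $\mu_0$), which is precisely what rules out a saddle--node and forces the two-branch crossing picture. The proposition as stated in the paper omits this, and you are right that without it conditions~(1)--(3) alone do not guarantee a transcritical bifurcation. Your remedy --- supplying the missing hypothesis from the explicit structure of the ordinary singularity of \eqref{desing}, which exists for all $(I,\gamma)$ --- is exactly what is implicitly being used when the paper applies the proposition in Proposition~\ref{prop:region:6:7}, even though this is never spelled out there. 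So your analysis is both correct and sharper than the paper's treatment.
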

\begin{proposition}\label{prop:region:6:7}
Consider the desingularized system \eqref{desing} with fixed $\gamma < 1$ and let 
\begin{align}
I_*(\gamma)&= \frac{1}{3b^3\gamma^3}\left(\sqrt{1-\gamma} + \frac{2b\sqrt[3]{1-\gamma}}{3}-ba\right)^3+a.
\label{I_*_GAMMa}
\end{align}      
Then system \eqref{desing} admits a transcritical bifurcation at $(\bm{p}, I_*(\gamma))$, where $\bm{p} = (y_{A*} , y_{B*})^{\top}$ is an ordinary singularity of \eqref{desing}, i.e., $\bm{p}$ solves \eqref{foldedsingb} and \eqref{foldedsingc}.
\end{proposition}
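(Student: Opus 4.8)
We apply the transcritical–bifurcation test of Proposition~\ref{transcitical:prop:GH} to the desingularized field $\bm{\rho}=(\rho_1,\rho_2)^\top$ of \eqref{desing}, taking $\mu=I$. Write $\xi_A=y_A-bz_A$ and $\xi_B=y_B-bz_B$ with $z_A,z_B$ as in \eqref{zero_p}, so $\rho_1=(1-y_B^2-\gamma)\,\xi_A$ and $\rho_2=-\gamma\,\xi_A+(1-y_A^2)\,\xi_B$, where $\xi_A$ depends only on $(y_A,I)$ and $\xi_B$ only on $(y_A,y_B,\gamma)$. The first task is to locate the bifurcation point. The ordinary singularity $\mathbf{p}$ is the equilibrium of \eqref{FN2} restricted to the $\mathbf{y}$-plane, so $\xi_A(\mathbf{p})=\xi_B(\mathbf{p})=0$; solving $\xi_A=0$ gives $I=\tilde b y_{A*}+\tfrac13 y_{A*}^3+a$ and solving $\xi_B=0$ gives $\gamma y_{A*}=(\tilde b+\gamma)y_{B*}+\tfrac13 y_{B*}^3+a$. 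The FSN~II is the value of $I$ at which this point reaches the fold curve. In the regime relevant to regions~(6)--(7) model neuron $A$ is firing, so $1-y_{A*}^2\neq 0$; hence the active fold branch is $1-y_{B*}^2-\gamma=0$, i.e.\ $y_{B*}=\pm\sqrt{1-\gamma}$ (the branch $y_{A*}=\pm1$ makes \eqref{foldedsingb}--\eqref{foldedsingc} force a $\gamma$-independent $I$ and is discarded). Substituting $y_{B*}^2=1-\gamma$ into the two relations and eliminating $y_{A*}$ produces the explicit curve $I_*(\gamma)$ of \eqref{I_*_GAMMa}; this elimination is the computational heart of the argument.

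With $(\mathbf{p},I_*(\gamma))$ in hand I verify the three hypotheses. For Condition~1, because $\xi_A(\mathbf{p})=\xi_B(\mathbf{p})=0$ and $\xi_A$ is independent of $y_B$, the Jacobian is lower triangular:
\[
D_\mathbf{y}\bm{\rho}(\mathbf{p})=\begin{pmatrix}(1-y_{B*}^2-\gamma)\,\partial_{y_A}\xi_A & 0\\ \partial_{y_A}\rho_2(\mathbf{p}) & (1-y_{A*}^2)\,\partial_{y_B}\xi_B\end{pmatrix},
\]
with $\partial_{y_A}\xi_A=1-b+by_{A*}^2>0$ and $\partial_{y_B}\xi_B=1-b+b\gamma+by_{B*}^2>0$. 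On the fold the $(1,1)$ entry vanishes, so $0$ is an eigenvalue, and it is simple because the other diagonal entry $(1-y_{A*}^2)\,\partial_{y_B}\xi_B$ is nonzero ($A$ firing). A one-line computation gives $\partial_{y_A}\rho_2(\mathbf{p})=-\gamma$, so a right null vector is $\mathbf{v}=\big(1,\ \gamma/\big((1-y_{A*}^2)\,\partial_{y_B}\xi_B\big)\big)^\top$ and a left null vector is $\mathbf{w}=(1,0)$.

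For Conditions~2 and 3 the decisive simplification is that $\bm{\rho}$ depends on $I$ only through $\xi_A$, with $\partial_I\xi_A=-b$, so $\partial_I\rho_1=-b(1-y_B^2-\gamma)$ and $\partial_I\rho_2=b\gamma$ is constant; hence $D^2_{\mathbf{y}I}\bm{\rho}$ has a single nonzero entry, $\partial_{y_B}\partial_I\rho_1=2by_B$. Thus $\mathbf{w}\,D^2_{\mathbf{y}I}\bm{\rho}(\mathbf{p},I_*)\,\mathbf{v}=2b\,y_{B*}\,v_2\neq 0$, since $y_{B*}=\sqrt{1-\gamma}\neq0$ for $\gamma<1$ and $v_2\neq0$, which is Condition~2. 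For Condition~3, because $\mathbf{w}=(1,0)$ only the Hessian of $\rho_1$ enters, and at $\mathbf{p}$ (using $1-y_{B*}^2-\gamma=0$ and $\xi_A(\mathbf{p})=0$) that Hessian has vanishing diagonal and off-diagonal entry $-2y_{B*}\,\partial_{y_A}\xi_A$, giving $\mathbf{w}\,D^2_\mathbf{y}\bm{\rho}(\mathbf{p},I_*)(\mathbf{v}^\top,\mathbf{v}^\top)^\top=-4\,y_{B*}\,(\partial_{y_A}\xi_A)\,v_1v_2\neq0$. All three conditions of Proposition~\ref{transcitical:prop:GH} then hold, which is the assertion.

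The main obstacle is twofold: (i) carrying out cleanly the elimination of $y_{A*}$ that yields the explicit $I_*(\gamma)$ in \eqref{I_*_GAMMa}, untangling the two defining relations together with the fold condition; and (ii) confirming that the $0$ eigenvalue at $\mathbf{p}$ is \emph{simple}, which rests on identifying the correct fold branch $y_{B*}=\pm\sqrt{1-\gamma}$ and on $1-y_{A*}^2\neq0$ and $\gamma\neq0$ holding along the range where $I_*(\gamma)$ bounds regions~(6) and (7). Once these are settled, the lower-triangular structure of $D_\mathbf{y}\bm{\rho}(\mathbf{p})$ and the fact that $\mathbf{w}=(1,0)$ reduce Conditions~2 and 3 to checking that one explicit scalar is nonzero, so the remainder is routine.
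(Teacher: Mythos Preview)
Your approach is the same as the paper's: apply the Guckenheimer--Holmes transcritical test (Proposition~\ref{transcitical:prop:GH}) with $\mu=I$, identifying the bifurcation point as the ordinary singularity landing on the fold branch $y_{B*}^2=1-\gamma$, and then checking the three conditions via the lower-triangular structure of $D_\mathbf{y}\bm{\rho}(\mathbf p)$ and the left eigenvector $\mathbf w=(1,0)$. Conditions~1 and 2 match the paper's computations essentially verbatim; note that your extra factor $\partial_{y_B}\xi_B$ in $v_2$ equals $1-b+by_{B*}^2+b\gamma=1$ on the fold, so your $\mathbf v$ in fact coincides with the paper's.

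Where you genuinely improve on the paper is Condition~3. You correctly argue that because $\mathbf w=(1,0)$, only the Hessian of $\rho_1$ contributes, and since at $\mathbf p$ both $1-y_{B*}^2-\gamma=0$ and $\xi_A=0$, that Hessian has vanishing diagonal and off-diagonal $-2y_{B*}\,\partial_{y_A}\xi_A$. This gives the clean scalar $-4y_{B*}(1-b+by_{A*}^2)v_2$, which is manifestly nonzero for $\gamma\in(0,1)$ and $y_{A*}^2\neq1$. The paper, by contrast, writes $D^2_\mathbf{y}\bm{\rho}$ as a $2\times4$ array and contracts it against $(\mathbf v^\top,\mathbf v^\top)^\top$ in a way that pulls in a spurious $y_{A*}$ term from $H_{\rho_2}$; its resulting expression $-\tfrac{2\gamma}{1-y_{A*}^2}\big(y_{B*}(1-b(1-y_{A*}^2))+y_{A*}\big)$ is not obviously sign-definite, so the paper has to appeal to a numerical figure to conclude. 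Your purely analytic verification is therefore both shorter and stronger than what the paper presents.
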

\begin{proof}
To show the transcritical bifurcation, we apply Proposition~\ref{transcitical:prop:GH} to \eqref{desing}. The Jacobian, $D_\mathbf{y} \bm{\rho}$,  of \eqref{desing} is 
\begin{displaymath}
\left(\begin{array}{cc}
(1-y_B^2-\gamma)(1-b+by_A^2) & -2y_B(y_A-bz_A)
\\
-\gamma - 2y_A(y_B-bz_A) & (1-y_{A}^2)(1-b+by_A^2+b\gamma)
\end{array}\right).
\end{displaymath}

\emph{Condition 1 of Proposition~\ref{transcitical:prop:GH}}: 
Evaluating the Jacobian of \eqref{desing} at $(y_A,y_B)^{\top}=\bm{p}$ and $I=I_*$ gives
\begin{displaymath}
D_\mathbf{y} \bm{\rho} (\bm{p},I_*) 
= \left(\begin{array}{cc}
0 & 0
\\
-\gamma& 1-y_{A*}^2
\end{array}\right),
\end{displaymath}
which has a zero eigenvalue with a left eigenvector $\bm{w} = (1,0)$ and a right eigenvector $\bm{v} = \left(1, \frac{\gamma}{1-y_{A*}^2}\right)^{\top}$.

\emph{Condition 2 of Proposition~\ref{transcitical:prop:GH}}:
Taking the derivative of $D_\mathbf{y} \mathbf{\rho}$ with respect to $I$ and evaluating at $(y_A,y_B)^{\top}=\bm{p}$ and $I=I_*$ gives 
\begin{displaymath}
D^2_{\mathbf{y}I} \bm{\rho}(\bm{p},I_*)
= \left(\begin{array}{cc}
0 & 2by_{B*} 
\\
0& 0
\end{array}\right).
\end{displaymath}
Then, multiplying $D^2_{\mathbf{y}\gamma} \bm{\rho}(\bm{p},I_*)$ from left by $\bm{w}$ and from right by $\bm{v}$, we have
\begin{displaymath}
\bm{w} \left(D^2_{\mathbf{y}\gamma}\bm{\rho}(\bm{p},I_*) \right) \bm{v} 
= 
\frac{2b\gamma y_{B*}}{1-y_{A*}^2},
\end{displaymath}
which is always nonzero. 

\emph{Condition 3 of Proposition~\ref{transcitical:prop:GH}}:
Evaluating $D^2_\mathbf{y} \bm{\rho}$ at $\bm{p}=(y_A,y_B)^{\top}$ and $I=I_*$ gives
\begin{displaymath}
D^2_\mathbf{y} \bm{\rho}(\bm{p},I_*) 
= 
\left(\begin{array}{cccc}
0 &2y_{B*}(s-1) & 0 &-2y_{A*} \\
2y_{B*}(s-1)& 0 &-2y_{A*} & 2y_{B*}s
\end{array}\right),
\end{displaymath}
where $s=b(1-y_{A*}^2)$. Then, multiplying $D^2_\mathbf{y} \bm{\rho}$ from left by $\bm{w}$ and from right by $(\bm{v}^\top,\bm{v}^\top)^\top$, we have	
\begin{align}
&\bm{w} \left( D^2_\mathbf{y} \bm{\rho}(\bm{p},I_*)\right) 
\left(\begin{array}{c} 
\bm{v} \\ \bm{v}
\end{array}\right) 
= 
\dfrac{-2\gamma}{1-y_{A*}^2}\left(y_{B*}(1-b(1-y_{A*}^2))+y_{A*}\right),
\label{w-HeSS-v-Quadratic}
\end{align}
which is also nonzero as shown in Figure~\ref{nonzero_second_derivative}. 
\end{proof}
\begin{figure}[h]
\centering
\includegraphics[width =0.4\textwidth]{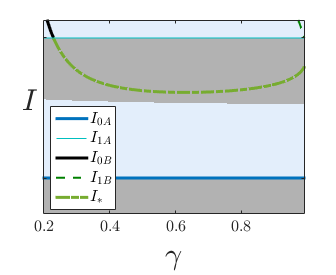}
\caption{Regions in the $I$-$\gamma$ parameter space distinguishing the sign of \eqref{w-HeSS-v-Quadratic}. In the light blue regions, the sign is positive. In the light gray regions, the sign is negative. At the boundaries, the sign becomes zero. For all $(I, \gamma)$ pairs on $I_*$ (shown by the green dashed line), the sign of \eqref{w-HeSS-v-Quadratic} is nonzero, except where $I_*$ intersects $I_{1A}$. The bifurcation at the intersection is a codimension two bifurcation.} \label{nonzero_second_derivative}
\end{figure}

A necessary condition for canard-induced MMOs is the existence of a folded node with a return mechanism, since the small canard solutions that form the small oscillations are only found in this context.\cite{brons2006mixed,krupa2014weakly} The folded node has a corresponding family of canard solutions because there are many trajectories that cross from the stable to the unstable branch of the critical manifold through the folded node singularity. Furthermore, the return mechanism is required for MMOs because, after each relaxation oscillation or canard trajectory, the dynamics must return near the singularity in order for the MMO to persist. In this case, MMOs are only possible for $I<I_*(\gamma)$, since that is where the folded singularity is an unstable folded node and the ordinary singularity is a saddle node.

\begin{remark}
Consider the two-FN system (\ref{FN2}). For $I_{0A}<I<I_*(\gamma)$, this system exhibits MMOs and if $I_*(\gamma)<I<I_{1A}$, it exhibits phase locking.
\end{remark}
\begin{figure*}[t]
\includegraphics[width =0.99\textwidth ]{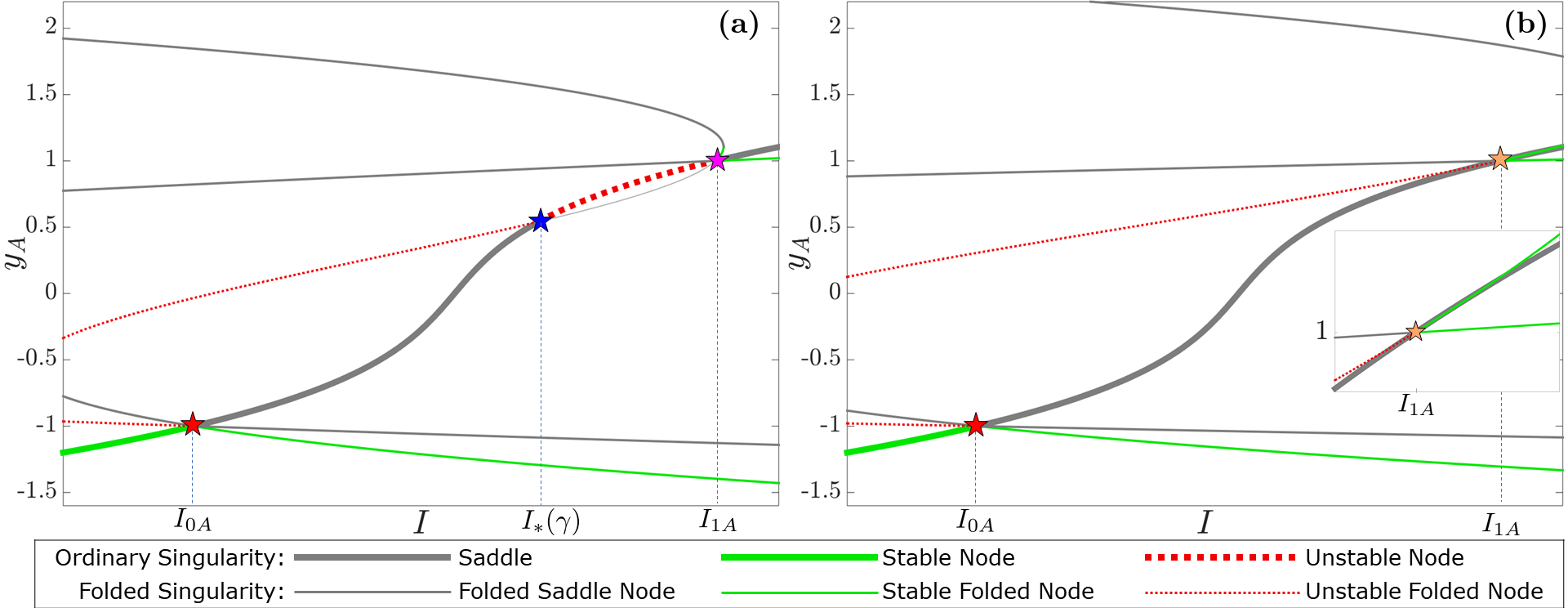}	
\caption{
This figure shows the bifurcation diagrams of the desingularized system \eqref{desing} with bifurcation parameter $I$ for two different values of $\gamma$. (a) When $\gamma=0.4$, a transcritical bifurcation (FSN II) occurs between the ordinary singularity and a folded singularity (blue star). For this choice of $\gamma$, the desingularized system admits another folded singularity (stable node) at $y_{A*}\approx 6$, which is not shown in either figure. (b) When $\gamma = 0.22$, the desingularized system admits a codimension two bifurcation (orange star) where the ordinary singularity remains a saddle, while one folded singularity switches from an unstable folded node to a stable folded node and the other switches from a folded saddle node to a stable folded node.
}
\label{TCd_bifn}
\end{figure*}
     
To highlight the location of the transcritical bifurcation (FSN II) and compare to features in Figure \ref{Igamma_bifn}, we show it as the blue star in the bifurcation diagram of \eqref{desing} in Figure~\ref{TCd_bifn}a.
Here, by treating $I$ as the bifurcation parameter, and maintaining $\gamma$ fixed at value $0.4$, we can observe that the ordinary singularity transitions from stable (thick green) to saddle (thick gray) at $I=I_{0A}$ (red star). Simultaneously, an unstable folded node (thin dashed red) becomes a folded saddle node (thin gray) and a folded saddle node becomes a stable folded node (thin green). For $I>I_{0A}$, MMOs are possible due to the combination of the unstable folded node and unstable ordinary singularity.

Then, at $I=I_*(\gamma)$ (blue star), derived in Proposition~\ref{prop:region:6:7}, the ordinary singularity and a folded singularity swap stability properties in a transcritical bifurcation which can be classified as an FSN II bifurcation. Also, this FSN II bifurcation in the desingularized system \eqref{desing} corresponds to the generalized Hopf bifurcation in the directed two-FN system (\ref{FN2}). For $I>I_*$, \eqref{FN2} exhibits phase locking and MMOs are no longer possible.

At $I=I_{1A}$ (magenta star), the ordinary singularity returns to a saddle and two folded saddle nodes become two stable folded nodes. For $I>I_{1A}$, phase locking in \eqref{FN2} is no longer possible. We also note that when $I$ is just above $I_{1A}$, one folded saddle node merges with a stable folded node through a saddle node bifurcation.

\begin{remark}
A special case of the transcritical (FSN II) bifurcations occurs when $\gamma$ and $I$ satisfy $I_*(\gamma)=I_{1A}$. In this case, there is a codimension two bifurcation where the eigenvalues of the linearization of \eqref{desing} about the ordinary singularity and the eigenvalues of the linearization of \eqref{desing} about the folded singularity, $y_{B*} = -\sqrt{1-\gamma}$, are equal to zero. The codimension two bifurcation is illustrated by the orange star in the bifurcation diagram of \eqref{desing} in Figure~\ref{TCd_bifn}b, for $\gamma = 0.22$. If we fix $I_{0A} < I < I_{1A}$ and decrease the value of $\gamma$ below the codimension two value, then MMOs are always possible.  
\end{remark}

%
%%%%%%%%%%%%%%%%%%%%%%%%%%%%%%%%%%%%%%%%%%%%%%%%%%%%%%%%%%%%
%%%%%%%%%%%%%%%%%%%%%%%%%%%%%%%%%%%%%%%%%%%%%%%%%%%%%%%%%%%%
\section{Directed Tree of FN model neurons}
\label{chainFN}
%%%%%%%%%%%%%%%%%%%%%%%%%%%%%%%%%%%%%%%%%%%%%%%%%%%%%%%%%%%%
%
In this section, we consider an extension of the previous results to a directed chain of coupled FN models. We leverage the connection between the desingularized system and the directed two-FN system to find sufficient conditions for phase locking. 

Consider a system of $k$ FN model neurons with dynamics
\begin{displaymath}
\dot{\mathbf{x}} = \mathbf{f}(\mathbf{x}, \mathbf{I},\bm{\gamma}),
\end{displaymath} 
where $\mathbf{x} \in \mathds{R}^{2k}$, $\mathbf{I} \in \mathds{R}^{k-1}$, and $\bm{\gamma} \in \mathds{R}^{k-1}$. All FN models receive an external input except for the last in the chain. Then, by allowing heterogeneity in the external inputs and coupling strengths, the linearization around the equilibrium point can be expressed as
\begin{displaymath}
D_{\mathbf{x}}\mathbf{f}
= 
\left(\begin{array}{cccccc}
J_1 & 0_{2\times2} & 0_{2\times2} & 0_{2\times2} & \cdots & 0_{2\times2} 
\\
\Gamma_1 & J_2 & 0_{2\times2} & 0_{2\times2} & \cdots & 0_{2\times2}
\\
0_{2\times2} & \Gamma_2 & J_3 & 0_{2\times2} & \ddots & \vdots 
\\
\vdots & \ddots & \ddots &\ddots & \ddots & 0_{2\times2} 
\\
0_{2\times2} & 0_{2\times2} & \ddots &\Gamma_{k-2} & J_{k-1} & 0_{2\times2} 
\\
0_{2\times2} & 0_{2\times2} &\cdots & 0_{2\times2} & \Gamma_{k-1} & J_{k}
\end{array}\right),
\end{displaymath}
where the first diagonal block is given by
\begin{displaymath}
J_1 
= 
\left(\begin{array}{cc}
1-y_{1}^2 & -1
\\
\epsilon & -b\epsilon
\end{array}\right),
\end{displaymath}
and the subsequent diagonal blocks are given by
\begin{displaymath}
J_i 
= 
\left(\begin{array}{cc}
1-y_{i}^2-\gamma_{i-1} & -1
\\
\epsilon & -b\epsilon
\end{array}\right),
\quad i \in \{2,\ldots,k\}.
\end{displaymath}
The blocks on the lower diagonal are
\begin{displaymath}
\Gamma_i 
= 
\gamma_i 
\left(\begin{array}{cc}
0 & 1 \\ 0 & 0
\end{array}\right),
 \quad i \in \{1,\ldots,k-1\}.
\end{displaymath}

Due to the lower block triangular structure of the linearization, local stability of the equilibrium can be determined by studying the eigenvalues of the diagonal blocks. Similar to the analysis at the beginning of Section~\ref{mainres}, we begin by solving for the equilibrium point. The equilibrium of the first model neuron is given by
\begin{align*}
y_{1*} 
&
= \left(\frac{3(I_1-a)}{2} + \sqrt{\frac{(3(I_1-a))^2}{4}+\tilde{b}^3}\right)^{1/3}
+ \left(\frac{3(I_1-a)}{2} - \sqrt{\frac{(3(I_1-a))^2}{4}+\tilde{b}^3}\right)^{1/3}.
\end{align*}
The equilibrium of the $i$-th model neuron is given by
\begin{align*}
y_{i*} &= \left(\frac{3\tilde{I}}{2} + \sqrt{\frac{(3\tilde{I})^2}{4}+\left(\tilde{b}+\gamma_{i-1}\right)^3}\right)^{1/3}
+ \left(\frac{3\tilde{I}}{2} - \sqrt{\frac{(3\tilde{I})^2}{4}+\left(\tilde{b}+\gamma_{i-1}\right)^3}\right)^{1/3},
\end{align*}
where $\tilde{I} = \gamma_{i-1} y_{i-1*}+I_i-a$, $i\in \{2,\ldots,k\}$. The eigenvalues of the individual diagonal blocks are
\begin{align*}
& \lambda_{1,2} 
= 
\frac{1}{2}\left(1 - y_{1*}^2 -b \epsilon\right)
\pm\frac{1}{2}\sqrt{(y_{1*}^2 +b \epsilon-1)^2-4 \epsilon (1-b+y_{1*}^2b)},
\\
& \lambda_{2i-1,2i} 
= 
\frac{1}{2}\left(1 - y_{i*}^2 - \gamma_{i-1} -b \epsilon\right)
\pm\frac{1}{2} \sqrt{(y_{i*}^2 +\gamma_{i-1} +b \epsilon-1)^2-4 \epsilon (1-b+y_{i*}^2b +\gamma_{i-1})},
\end{align*}
where $i =2,\ldots, k$. The Hopf bifurcations in the $i$-th model neuron occur at
\begin{align}
I_{H\pm} 
&
= \pm\frac{1}{3}({1-\gamma_{i-1}-b\epsilon})^{3/2} 
\nonumber 
 \pm\sqrt{1-\gamma_{i-1}-b\epsilon}\left(\tilde{b}+\gamma_{i-1}\right)-\tilde{I}.
\end{align}

As a directed tree can be decomposed into a collection of directed chains, these results generalize to directed trees as well. In Figure \ref{tree}, we illustrate with the directed chain that starts with the light orange FN model and is directed to the right to the cyan FN model. The first FN model (light orange) receives an input $I=1.2$, which ensures that it is in the firing regime. The coupling strength to the second FN model (dark orange) with input $I=0.4$ ensure that the second FN model is in region (6) where MMOs are possible. However, in this case no MMOs are exhibited. The coupling strength to the third FN model (dark cyan) with zero input ensure that it too is in region (6). In this case, MMOs induced by canards are exhibited. The active signal has frequency half that of the first and second FN models. As a result, the input to the fourth FN model (cyan) is an MMO; this case was not covered in our two-FN system analysis. The fourth FN model responds to incoming canards with almost no activity and incoming spikes with a small canard. The frequency of the small canards in the fourth FN model is the same as the frequency of the active signal of the third FN model.
\begin{figure}[h]
\centering
\includegraphics[width =0.5\textwidth]{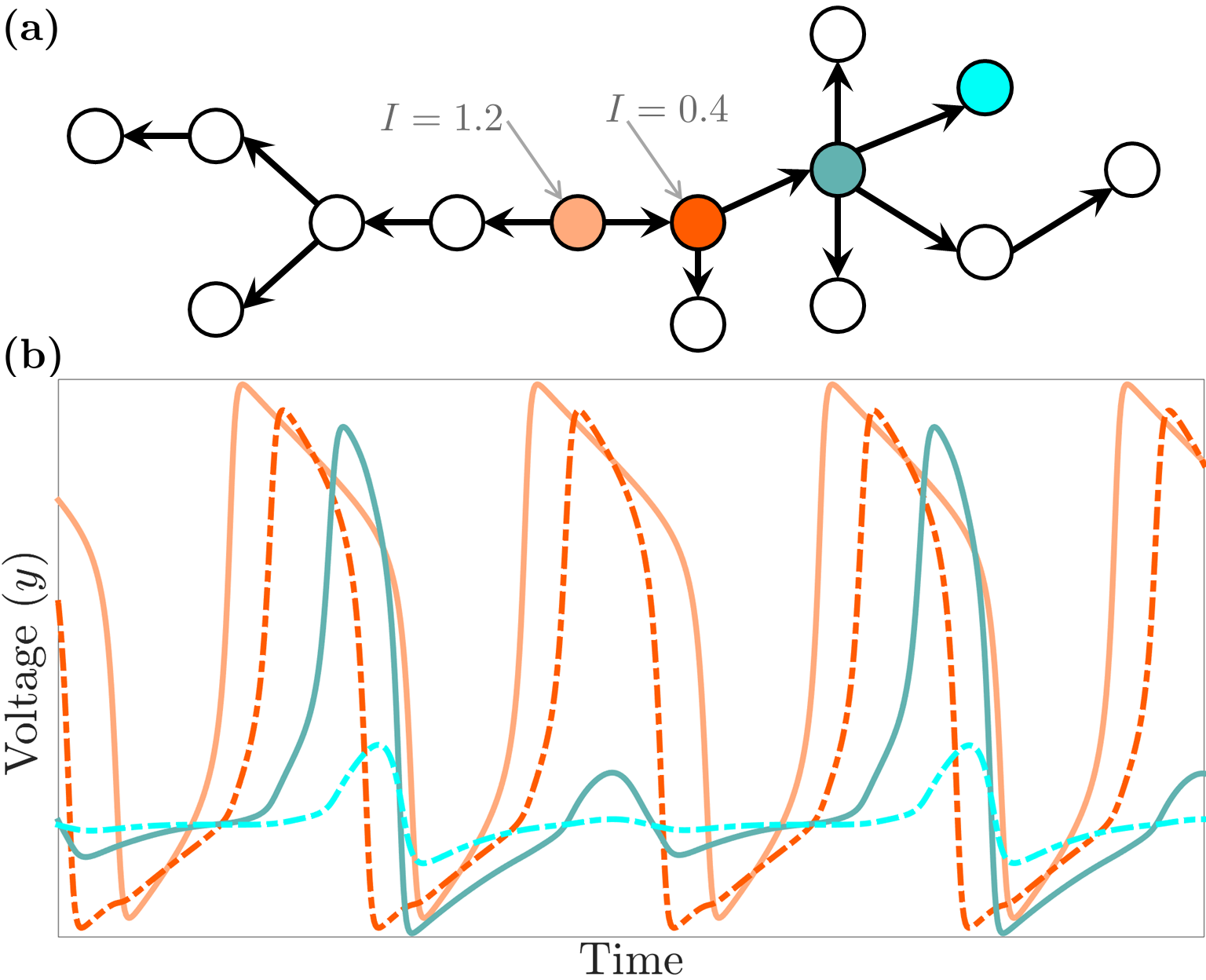}
\caption{
Panel (a) depicts a directed tree graph of FN model neurons with heterogeneous external inputs $I_i$. All edge weights have coupling strength $\gamma = 0.07$. A representative chain is selected and indicated by nodes with colors matching simulation results, which are shown in panel (b). The frequency of the cyan FN models is half of the frequency of the orange FN models.
}
\label{tree}
\end{figure}
%%%%%%%%%%%%%%%%%%%%%%%%%%%%%%%%%%%%%%%%%%%%%%%%%%%%%%%%%%%%
%
%
%

%
%
%%%%%%%%%%%%%%%%%%%%%%%%%%%%%%%%%%%%%%%%%%%%%%%%%%%%%%%%%%%%
%%%%%%%%%%%%%%%%%%%%%%%%%%%%%%%%%%%%%%%%%%%%%%%%%%%%%%%%%%%%
\section{Discussion}
%%%%%%%%%%%%%%%%%%%%%%%%%%%%%%%%%%%%%%%%%%%%%%%%%%%%%%%%%%%%
%
In this work, we study a system of two FN model neurons in a setting where the first FN model has a constant external input $I$, the second FN model has no input, and there is a unidirectional coupling with strength $\gamma$ from the first FN model to the second. We study and rigorously characterize all of the different regions of dynamic behavior for the two-FN system in $I$-$\gamma$ space. We prove new necessary conditions in terms of both $I$ and $\gamma$ for the existence of canards and MMOs. We leverage this result to find a similarly new sufficient condition for phase locking and extend to systems of FN models in directed tree networks. We illustrate for a directed chain of four FN models, where canards, MMOs, and frequency halving is observed as predicted.

Further investigation of the two-FN system is needed to determine the threshold between MMOs and canard trajectories without MMOs, which have been observed in simulation. This threshold has been studied numerically, as well as the chaotic behavior at the boundaries between types of MMOs, e.g., in \cite{hoff_numerical_2014}. An analytical understanding of these phenomena would add significantly to the literature on canards and MMOs.

Future directions include consideration of more diverse graph structures that include loops within the graph and a more detailed analysis of the MMOs in these systems. General results have been found for finite dimensional fast-slow systems, which could be applied in this context \cite{wechselberger2012propos}. Incorporating heterogeneous model parameters is another area of future investigation. Changing $\epsilon$ changes the frequency of oscillation and the timescale of the FN model, so a network of FN models with differing values of $\epsilon$ would be a compelling system for exploring canard phenomena in three or more distinct timescales.

%%%%%%%%%%%%%%%%%%%%%%%%%%%%%%%%%%%%%%%%%%%%%%%%%%%%%%%%%%%%
%
%
%

%
%
%%%%%%%%%%%%%%%%%%%%%%%%%%%%%%%%%%%%%%%%%%%%%%%%%%%%%%%%%%%%
%%%%%%%%%%%%%%%%%%%%%%%%%%%%%%%%%%%%%%%%%%%%%%%%%%%%%%%%%%%%
\section{Acknowledgments}
%%%%%%%%%%%%%%%%%%%%%%%%%%%%%%%%%%%%%%%%%%%%%%%%%%%%%%%%%%%%
%
This work was jointly supported by the National Science Foundation under NSF-CRCNS grant DMS-1430077 and the Office of Naval Research under ONR grant N00014-14-1-0635. This material is also based upon work supported by the National Science Foundation Graduate Research  Fellowship under grant DGE-1656466. Any opinion, findings, and conclusions or recommendations expressed in this material are those of the authors and do not necessarily reflect the views of the National Science Foundation.
%%%%%%%%%%%%%%%%%%%%%%%%%%%%%%%%%%%%%%%%%%%%%%%%%%%%%%%%%%%%
%
%
%
%
%
%%%%%%%%%%%%%%%%%%%%%%%%%%%%%%%%%%%%%%%%%%%%%%%%%%%%%%%%%%%%
%%%%%%%%%%%%%%%%%%%%%%%%%%%%%%%%%%%%%%%%%%%%%%%%%%%%%%%%%%%%

%\bibliographystyle{unsrt}
%\bibliography{bibfile}

\begin{thebibliography}{10}

\bibitem{hodgkin1952quantitative}
A.~L. Hodgkin and A.~F. Huxley.
\newblock A quantitative description of membrane current and its application to
  conduction and excitation in nerve.
\newblock {\em {The Journal of Physiology}}, 117(4):500--544, 1952.

\bibitem{fitzhugh_mathematical_1955}
R.~FitzHugh.
\newblock Mathematical models of threshold phenomena in the nerve membrane.
\newblock {\em {The Bulletin of Mathematical Biophysics}}, 17(4):257--278,
  1955.

\bibitem{fitzhugh1961impulses}
R.~FitzHugh.
\newblock Impulses and physiological states in theoretical models of nerve
  membrane.
\newblock {\em {Biophysical Journal}}, 1(6):445--466, 1961.

\bibitem{nagumo1962active}
J.~Nagumo, S.~Arimoto, and S.~Yoshizawa.
\newblock An active pulse transmission line simulating nerve axon.
\newblock {\em Proceedings of the IRE}, 50(10):2061--2070, 1962.

\bibitem{izhikevich2007dynamical}
E.~M. Izhikevich.
\newblock {\em Dynamical Systems in Neuroscience: The Geometry of Excitability
  and Bursting}.
\newblock MIT press, 2007.

\bibitem{russo_global_2010}
G.~Russo and J.~E. Slotine.
\newblock Global convergence of quorum-sensing networks.
\newblock {\em Physical Review E}, 82(4):041919, 2010.

\bibitem{schaub2016graph}
M.~T. Schaub, N.~O'Clery, Y.~N. Billeh, J.~Delvenne, R.~Lambiotte, and
  M.~Barahona.
\newblock Graph partitions and cluster synchronization in networks of
  oscillators.
\newblock {\em Chaos: An Interdisciplinary Journal of Nonlinear Science},
  26(9):094821, 2016.

\bibitem{sorrentino2016complete}
F.~Sorrentino, L.~M. Pecora, A.~M. Hagerstrom, T.~E. Murphy, and R.~Roy.
\newblock Complete characterization of the stability of cluster synchronization
  in complex dynamical networks.
\newblock {\em Science Advances}, 2(4):e1501737, 2016.

\bibitem{davison_sync_2016}
E.~N. Davison, B.~Dey, and N.~Ehrich Leonard.
\newblock Synchronization bound for networks of nonlinear oscillators.
\newblock {\em Proceedings of the 54th Annual Allerton Conference on
  Communication, Control and Computing}, 2016.

\bibitem{aminzare2018cluster}
Z.~Aminzare, B.~Dey, E.~N. Davison, and N.~Ehrich Leonard.
\newblock Cluster synchronization of diffusively-coupled nonlinear systems: A
  contraction based approach.
\newblock {\em Journal of Nonlinear Science}, 2018.

\bibitem{hoff_numerical_2014}
A.~Hoff, J.~V. Santos, C.~Manchein, and H.~A. Albuquerque.
\newblock Numerical bifurcation analysis of two coupled {FitzHugh}-{Nagumo}
  oscillators.
\newblock {\em The European Physical Journal B}, 87(7):151, July 2014.

\bibitem{campbell2001multistability}
S.~Campbell and M.~Waite.
\newblock Multistability in coupled {FitzHugh-Nagumo} oscillators.
\newblock {\em Nonlinear Analysis}, 47:1093--1104, 2001.

\bibitem{benoit1990canards}
{\'E}.~Beno{\^\i}t.
\newblock Canards et enlacements.
\newblock {\em Publications Math{\'e}matiques de l'Institut des Hautes
  {\'E}tudes Scientifiques}, 72(1):63--91, 1990.

\bibitem{szmolyan2001canards}
P.~Szmolyan and M.~Wechselberger.
\newblock Canards in {$\mathbb{R}^3$}.
\newblock {\em Journal of Differential Equations}, 177(2):419--453, 2001.

\bibitem{kuehn2015multiple}
C.~Kuehn.
\newblock {\em Multiple Time Scale Dynamics}, volume 191.
\newblock Springer, 2015.

\bibitem{wechselberger_existence_2005}
M.~Wechselberger.
\newblock Existence and bifurcation of canards in {$\mathbb{R}^3$} in the case
  of a folded node.
\newblock {\em SIAM Journal on Applied Dynamical Systems}, 4(1):101--139,
  January 2005.

\bibitem{milik1998geometry}
A.~Milik, P.~Szmolyan, H.~L{\"o}ffelmann, and E.~Gr{\"o}ller.
\newblock Geometry of mixed-mode oscillations in the 3-{D} autocatalator.
\newblock {\em International Journal of Bifurcation and Chaos}, 8(03):505--519,
  1998.

\bibitem{benoit1983systemes}
{\'E}.~Beno{\^\i}t.
\newblock Systemes lents-rapides dans {$\mathbb{R}^3$} et leurs canards.
\newblock {\em Ast{\'e}risque}, 109(110):159--191, 1983.

\bibitem{tchizawa_two_2013}
K.~Tchizawa.
\newblock On the two methods for finding 4-dimensional duck solutions.
\newblock {\em Applied Mathematics}, 05(01):16, December 2013.

\bibitem{wechselberger2012propos}
M.~Wechselberger.
\newblock A propos de canards (apropos canards).
\newblock {\em Transactions of the American Mathematical Society},
  364(6):3289--3309, 2012.

\bibitem{desroches_geometry_2008}
M.~Desroches, B.~Krauskopf, and H.~Osinga.
\newblock The geometry of slow manifolds near a folded node.
\newblock {\em SIAM Journal on Applied Dynamical Systems}, 7(4):1131--1162,
  January 2008.

\bibitem{krupa2010local}
M.~Krupa and M.~Wechselberger.
\newblock Local analysis near a folded saddle-node singularity.
\newblock {\em Journal of Differential Equations}, 248(12):2841--2888, 2010.

\bibitem{Curtu_Rubin_Canard-MMO}
R.~Curtu and J.~Rubin.
\newblock Interaction of canard and singular hopf mechanisms in a neural model.
\newblock {\em SIAM Journal on Applied Dynamical Systems}, 10(4):1443--1479,
  2011.

\bibitem{desroches2012mixed}
M.~Desroches, J.~Guckenheimer, B.~Krauskopf, C.~Kuehn, H.~M. Osinga, and
  M.~Wechselberger.
\newblock Mixed-mode oscillations with multiple time scales.
\newblock {\em SIAM Review}, 54(2):211--288, 2012.

\bibitem{petrov1992mixed}
V.~Petrov, S.~K. Scott, and K.~Showalter.
\newblock Mixed-mode oscillations in chemical systems.
\newblock {\em The Journal of Chemical Physics}, 97(9):6191--6198, 1992.

\bibitem{moehlis2002canards}
J.~Moehlis.
\newblock Canards in a surface oxidation reaction.
\newblock {\em Journal of Nonlinear Science}, 12(4), 2002.

\bibitem{rubin2007giant}
J.~Rubin and M.~Wechselberger.
\newblock Giant squid-hidden canard: {T}he {3D} geometry of the
  {Hodgkin--Huxley} model.
\newblock {\em Biological Cybernetics}, 97(1):5--32, 2007.

\bibitem{rotstein2008canard}
H.~G. Rotstein, M.~Wechselberger, and N.~Kopell.
\newblock Canard induced mixed-mode oscillations in a medial entorhinal cortex
  layer {II} stellate cell model.
\newblock {\em SIAM Journal on Applied Dynamical Systems}, 7(4):1582--1611,
  2008.

\bibitem{desrochesmixedmode2008}
M.~Desroches, B.~Krauskopf, and H.~M. Osinga.
\newblock Mixed-mode oscillations and slow manifolds in the self-coupled
  {FitzHugh}-{Nagumo} system.
\newblock {\em Chaos: An Interdisciplinary Journal of Nonlinear Science},
  18(1):015107, March 2008.

\bibitem{tchizawa2002winding}
K.~Tchizawa and S.~Campbell.
\newblock On winding duck solutions in {$\mathbb{R}^4$}.
\newblock {\em Proceedings of Neural, Parallel, and Scientific Computations},
  2:315--318, 2002.

\bibitem{ginoux2015canards}
J.~Ginoux and J.~Llibre.
\newblock Canards existence in {FitzHugh-Nagumo} and {Hodgkin-Huxley} neuronal
  models.
\newblock {\em Mathematical Problems in Engineering}, 2015, 2015.

\bibitem{krupa2014weakly}
M.~Krupa, B.~Ambrosio, and M.~A. Aziz-Alaoui.
\newblock Weakly coupled two-slow--two-fast systems, folded singularities and
  mixed mode oscillations.
\newblock {\em Nonlinearity}, 27(7):1555, 2014.

\bibitem{rosenblum1996phase}
M.~G. Rosenblum, A.~S. Pikovsky, and J.~Kurths.
\newblock Phase synchronization of chaotic oscillators.
\newblock {\em Physical review letters}, 76(11):1804, 1996.

\bibitem{van1928lxxii}
V.~van~der Pol and J.~van~der Mark.
\newblock {LXXII. T}he heartbeat considered as a relaxation oscillation, and an
  electrical model of the heart.
\newblock {\em The London, Edinburgh, and Dublin Philosophical Magazine and
  Journal of Science}, 6(38):763--775, 1928.

\bibitem{glass1982fine}
L.~Glass and R.~Perez.
\newblock Fine structure of phase locking.
\newblock {\em Physical Review Letters}, 48(26):1772, 1982.

\bibitem{guevara1981phase}
M.~R. Guevara, L.~Glass, and A.~Shrier.
\newblock Phase locking, period-doubling bifurcations, and irregular dynamics
  in periodically stimulated cardiac cells.
\newblock {\em Science}, 214(4527):1350--1353, 1981.

\bibitem{matsumoto1987chaos}
G.~Matsumoto, K.~Aihara, Y.~Hanyu, N.~Takahashi, S.~Yoshizawa, and J.~Nagumo.
\newblock Chaos and phase locking in normal squid axons.
\newblock {\em Physics Letters A}, 123(4):162--166, 1987.

\bibitem{kopell2002mechanisms}
N.~Kopell and G.~B. Ermentrout.
\newblock Mechanisms of phase-locking and frequency control in pairs of coupled
  neural oscillators.
\newblock {\em Handbook of Dynamical Systems}, 2:3--54, 2002.

\bibitem{rutishauser2010human}
U.~Rutishauser, I.~B. Ross, A.~N. Mamelak, and E.~M. Schuman.
\newblock Human memory strength is predicted by theta-frequency phase-locking
  of single neurons.
\newblock {\em Nature}, 464(7290):903, 2010.

\bibitem{wang2010neurophysiological}
X.~Wang.
\newblock Neurophysiological and computational principles of cortical rhythms
  in cognition.
\newblock {\em Physiological Reviews}, 90(3):1195--1268, 2010.

\bibitem{varela2001brainweb}
F.~Varela, J.~P. Lachaux, E.~Rodriguez, and J.~Martinerie.
\newblock The brainweb: {P}hase synchronization and large-scale integration.
\newblock {\em Nature Reviews Neuroscience}, 2(4):229, 2001.

\bibitem{medvedev2001synchronization}
G.~S. Medvedev and N.~Kopell.
\newblock Synchronization and transient dynamics in the chains of electrically
  coupled {FitzHugh--Nagumo} oscillators.
\newblock {\em SIAM Journal on Applied Mathematics}, 61(5):1762--1801, 2001.

\bibitem{rocsoreanu2012fitzhugh}
C.~Rocsoreanu, A.~Georgescu, and N.~Giurgiteanu.
\newblock {\em The FitzHugh-Nagumo Model: {B}ifurcation and Dynamics},
  volume~10.
\newblock Springer Science \& Business Media, 2012.

\bibitem{guckenheimer2013nonlinear}
J.~Guckenheimer and P.~Holmes.
\newblock {\em Nonlinear Oscillations, Dynamical Systems, and Bifurcations of
  Vector Fields}, volume~42.
\newblock Springer Science \& Business Media, 2013.

\bibitem{dhooge2003matcont}
A.~Dhooge, W.~Govaerts, and Y.~A. Kuznetsov.
\newblock {MATCONT}: {A} {MATLAB} package for numerical bifurcation analysis of
  {ODE}s.
\newblock {\em ACM Transactions on Mathematical Software (TOMS)},
  29(2):141--164, 2003.

\bibitem{Rinzel1981}
J.~Rinzel.
\newblock {\em Models in Neurobiology}, pages 345--367.
\newblock Springer New York, Boston, MA, 1981.

\bibitem{holmes_notes}
P.~Eckhoff and P.~Holmes.
\newblock A short course in {M}athematical {N}euroscience, 2015.

\bibitem{hassard1978bifurcation}
B.~Hassard and Y.~H. Wan.
\newblock Bifurcation formulae derived from center manifold theory.
\newblock {\em Journal of Mathematical Analysis and Applications},
  63(1):297--312, 1978.

\bibitem{baer1986singular}
S.~M. Baer and T.~Erneux.
\newblock Singular {H}opf bifurcation to relaxation oscillations.
\newblock {\em SIAM Journal on Applied Mathematics}, 46(5):721--739, 1986.

\bibitem{rinzel1983hopf}
J.~Rinzel and J.~P. Keaner.
\newblock Hopf bifurcation to repetitive activity in nerve.
\newblock {\em SIAM Journal on Applied Mathematics}, 43(4):907--922, 1983.

\bibitem{freire2011stern}
J.~G. Freire and J.~A.C. Gallas.
\newblock Stern--{B}rocot trees in cascades of mixed-mode oscillations and
  canards in the extended {B}onhoeffer--van der {P}ol and the
  {FitzHugh--Nagumo} models of excitable systems.
\newblock {\em Physics Letters A}, 375(7):1097--1103, 2011.

\bibitem{wechselberger_canard_2013}
M.~Wechselberger, J.~Mitry, and J.~Rinzel.
\newblock Canard theory and excitability.
\newblock In P.~E. Kloeden and C.~P\"{o}tzsche, editors, {\em Nonautonomous
  {Dynamical} {Systems} in the {Life} {Sciences}}, number 2102 in Lecture
  {Notes} in {Mathematics}. Springer International Publishing, 2013.

\bibitem{takens1973unfoldings}
F.~Takens.
\newblock Unfoldings of certain singularities of vectorfields: Generalized
  {H}opf bifurcations.
\newblock {\em Journal of Differential Equations}, 14(3):476--493, 1973.

\bibitem{golubitsky1981classification}
M.~Golubitsky and W.~F. Langford.
\newblock Classification and unfoldings of degenerate {H}opf bifurcations.
\newblock {\em Journal of Differential Equations}, 41(3):375--415, 1981.

\bibitem{brons2006mixed}
M.~Br{\o}ns, M.~Krupa, and M.~Wechselberger.
\newblock Mixed mode oscillations due to the generalized canard phenomenon.
\newblock {\em Fields Institute Communications}, 49:39--63, 2006.

\end{thebibliography}
\end{document}